\newtheorem{Theorem}{Theorem}[section]
\newtheorem{Lemma}[Theorem]{Lemma}
\newtheorem{Cor}[Theorem]{Corollary}
\newtheorem{Prop}[Theorem]{Proposition}
\newtheorem{Rem}[Theorem]{Remark}
\def\cF{\mathcal{F}}
   \def\fm{\mathfrak{m}}
\def\Erw{\mathbb{E}}
\def\G{\mathbb{G}}
\def\N{\mathbb{N}}
\def\Prob{\mathbb{P}}
\def\R{\mathbb{R}}
\def\eps{\varepsilon}
\def\1{\textbf{1}}
\def\3{{\ss}}
\def\eqdist{\stackrel{d}{=}}
\def\wh{\widehat}
\def\ovl{\overline}
\def\dual{{}^{\#}\hspace{-1.1pt}}
\newcommand{\ps}[1]{\hspace{-1.2pt}{\scalebox{.5}{ $#1$}}}
\newcommand{\pss}[1]{\hspace{-1.2pt}{\scalebox{.55}{ $#1$}}}
\def\elll{\ell^{\ps{\bm<}}}
\def\ellg{\ell^{\ps{\bm>}}}
\def\sg{\sigma^{\ps{\bm>}}} 
\def\sgn{\sigma_{n}^{>}}
\def\sl{\sigma^{\ps{\bm<}}} 
\def\sln{\sigma_{n}^{\ps{\bm<}}}  
\def\Ml{M^{\hspace{-.5pt}\pss{\bm<}}}   
\newcommand{\Mln}{M_{n}^{\hspace{-.5pt}\pss{\bm<}}}
\def\Sl{S^{\ps{\bm<}}}   
\def\Xl{X^{\hspace{-.5pt}\pss{\bm<}}}   
\newcommand{\Xln}{X_{n}^{\hspace{-.5pt}\pss{\bm<}}}     
\def\Ql{Q^{\hspace{-.5pt}\pss{\bm<}}}   
\newcommand{\Qln}{Q_{n}^{\hspace{-.5pt}\pss{\bm<}}}
\def\Psil{\Psi^{\hspace{-.5pt}\pss{\bm<}}}   
\newcommand{\Psiln}{\Psi_{n}^{\hspace{-.5pt}\pss{\bm<}}}
\begin{document}

\begin{frontmatter}

\title{Recurrence and transience of random difference equations in the critical case}
\runtitle{Recurrence and transience of RDE's in the critical case}

\begin{aug}
\author{\fnms{Gerold}  \snm{Alsmeyer}\corref{}\thanksref{t1}\ead[label=e1]{gerolda@uni-muenster.de}}
\and
\author{\fnms{Alexander} \snm{Iksanov}\corref{}\thanksref{t2}\ead[label=e2]{iksan@univ.kiev.ua}}
    \thankstext{t1}{Partially funded by the Deutsche Forschungsgemeinschaft (DFG) under Germany's Excellence Strategy EXC 2044--390685587, Mathematics M\"unster: Dynamics--Geometry--Structure.}
   \thankstext{t2}{Supported by the National Research Foundation of Ukraine (project 2020.02/0014 ``Asymptotic regimes of perturbed random walks: on the edge of modern and classical probability'').}

\runauthor{G.~Alsmeyer and A.~Iksanov}
  \affiliation{University of M\"unster and Taras Shevchenko National University of Kyiv}

  \address{G.~Alsmeyer\\Inst.~Math.~Stochastics,\\ Department
of Mathematics\\ and Computer Science\\ University of M\"unster\\ Orl\'eans-Ring 10,\\ D-48149 M\"unster, Germany\\
          \printead{e1}\\
          }
  \address{A.~Iksanov\\ Faculty of Computer Science and Cybernetics\\ Taras Shevchenko National\\ University of Kyiv\\ 01601 Kyiv, Ukraine\\
          \printead{e2}}

         \end{aug}

\begin{abstract}
For i.i.d. 
random vectors $(M_{1},Q_{1}),(M_{2},Q_{2}),\ldots$ such that $M>0$ a.s., $Q\geq 0$ a.s.~and $\Prob(Q=0)<1$, the random difference equation $X_{n}=M_{n}X_{n-1}+Q_{n}$, $n=1,2,\ldots$, is studied in the critical case when the random walk with increments $\log M_{1},\log M_{2}$ is oscillating. We provide conditions for the null-recurrence and transience of the Markov chain $(X_{n})_{n\ge 0}$ by inter alia drawing on techniques developed in the related article \cite{AlsBurIks:17} for another case exhibiting the null-recurrence/transience dichotomy.
\end{abstract}

\begin{keyword}[class=MSC]
\kwd[Primary ]{60J10}
\kwd[; secondary ]{60F15}
\end{keyword}

\begin{keyword}
\kwd{invariant Radon measure}
\kwd{null-recurrence}
\kwd{perpetuity}
\kwd{random difference equation}
\kwd{transience}
\end{keyword}

\end{frontmatter}

\section{Introduction}\label{sec:intro}

Let $(M_{1},Q_{1}),(M_{2},Q_{2}),\ldots$ be i.i.d.~$\R_{+}^{2}$-valued random vectors with generic copy $(M,Q)$, where $\R_+:=[0,\infty)$. The purpose of this article is to continue recent work \cite{AlsBurIks:17} on the recurrence/transience properties of the Markov chain $(X_{n})_{n\ge 0}$ which is recursively defined by the random difference equation (RDE)
\begin{equation}\label{chain}
X_{n}\ :=\ M_{n}X_{n-1}+Q_{n},\quad n\in\N
\end{equation}
and called \emph{RDE-chain with associated random vector $(M,Q)$} hereafter. If $X_{0}=x$, we also write $X_{n}^{x}$ for $X_{n}$, and it is generally understood that $X_{0}$ and the $(M_{n},Q_{n})$ are independent. Basic assumptions throughout this work are that
\begin{gather}
\Prob(M=0)\,=\,0,\quad\Prob(Q=0)\,<\,1\label{eq:basic assumption 1},
\shortintertext{and, most importantly,}
\liminf_{n\to-\infty}\Pi_{n}\,=\,0\quad\text{and}\quad\limsup_{n\to\infty}\Pi_{n}\,=\,+\infty\quad\text{a.s.}\label{eq:basic assumption 2}
\shortintertext{where}
\Pi_{0}\,:=\,0\quad\text{and}\quad \Pi_{n}\,:=\,\prod_{k=1}^{n}M_{k}\quad\text{for }n\in\N.\nonumber
\end{gather}

Condition \eqref{eq:basic assumption 2}, which particularly holds true if
\begin{equation}\label{eq:ElogM=0}
\Erw\log M\,=\,0\quad\text{and}\quad\Prob( M=1)\,<\,1,
\end{equation}
is often referred to as the \emph{critical case} because it marks the interface between two quite different situations: the \emph{contractive case} $\lim_{n\to\infty}\Pi_{n}=0$ a.s. when the RDE-chain is positive recurrent under some mild additional conditions on $(M,Q)$, see \cite[Thm.~2.1]{GolMal:00}, and the \emph{divergent case} $\lim_{n\to\infty}\Pi_{n}=+\infty$ a.s. when the chain is typically transient. The latter can be seen from representation \eqref{eq:X_n and its dual} given below.

\vspace{.1cm}
Let us also point out that, as $M,Q$ are nonnegative, \eqref{eq:basic assumption 1} and \eqref{eq:basic assumption 2} further imply the nondegeneracy condition
\begin{equation}\label{eq:basic assumption 3}
\Prob(Mc+Q=c)\,<\,1\text{ for all }c\in\R.
\end{equation}
For a proof, notice that \eqref{eq:basic assumption 2} entails $\Prob(M<1)>0$ and $\Prob(M>1)>0$. Therefore, $Mc+Q=c$ for some $c\in\R$ would lead to the impossible conclusion that either $Q=0$ a.s., which is ruled out by \eqref{eq:basic assumption 1}, or
\begin{align*}
\Prob(Q<0)\ =\ \Prob(c(1-M)<0)\ =\ \left.
\begin{cases}
\Prob(M>1),&\text{if }c>0\\
\Prob(M<1),&\text{if }c<0
\end{cases}\right\}\ >\ 0.
\end{align*}

As usual, we put $\log_{+}x:=\log(x\vee 1)$ and $\log_{-}x:=-\log(x\wedge 1)$ for $x>0$.
Assuming \eqref{eq:ElogM=0} and, furthermore,
\begin{equation}\label{eq:Lyapunov condition}
\Erw\log^{2+\epsilon}M\,<\,\infty\quad\text{and}\quad\Erw\log_{+}^{2+\epsilon}Q\,<\,\infty
\end{equation}
for some $\epsilon>0$, Babillot et al. \cite{BabBouElie:97} showed more than twenty years ago that $(X_{n})_{n\ge 0}$ is null recurrent and possesses a unique (up to scalars) stationary Radon measure. Both intuitively and from their provided proof, one can expect that Condition \eqref{eq:Lyapunov condition} is far from being necessary. In view of the large number of publications on RDE's during the last decade, see the recent monographs by Buraczewski et al. \cite{BurDamMik:16} and Iksanov \cite{Iksanov:2016} for surveys, 
it appears to be surprising that the result has apparently not been improved until today. Such improvements are now provided by Theorems \ref{thm:main 1} and \ref{thm:main 2}, which are our main results and stated below after some further notation and relevant information.

\vspace{.1cm}
Put $S_{0}:=0$ and
$$ S_{n}\ :=\ \log\Pi_{n}\ =\ \sum_{k=1}^{n}\log M_{k}\quad\text{for }n\in\N. $$
In the critical case, $(S_{n})_{n\ge 0}$ forms an ordinary \emph{oscillating} random walk, i.e.
\begin{equation*}
\liminf_{n\to-\infty}S_{n}\,=\,-\infty\quad\text{and}\quad\limsup_{n\to\infty}S_{n}\,=\,+\infty\quad\text{a.s.}
\end{equation*}
The associated strictly descending ladder epochs, defined by $\sl_{0}:=0$ and, recursively,
\begin{equation}\label{eq:def ladder epochs}
\sln\ :=\ \inf\left\{k>\sl_{n-1}:S_{k}-S_{\sl_{n-1}}<0\right\},\quad n\in\N
\end{equation}
are then a.s. finite with infinite mean, i.e.~$\Erw\sl=\infty$ for $\sl:=\sl_{1}$. Regarding the associated first ladder height $S_{\sl}$, we note that $\Erw\log_{-}^{p+1}M<\infty$ for $p>0$
ensures
\begin{equation}\label{eq:ES_sl^p finite}
\Erw |S_{\sl}|^{p}\,<\,\infty,
\end{equation}
see \cite[p.~250]{Doney:80}. In particular, $\Erw\log_{-}^{2}M<\infty$ is sufficient for
\begin{equation}\label{eq:ES_sl finite}
\kappa\ :=\ \Erw |S_{\sl}|\,<\,\infty.
\end{equation}
Recall that $(S_{n})_{n\ge 0}$ satisfies the \emph{Spitzer condition} if, for some $0\le\rho\le 1$,
\begin{equation}\label{eq:Spitzer condition}
\lim_{n\to\infty}\frac{1}{n}\sum_{k=1}^{n}\Prob(S_{k}<0)\ =\ \rho.
\end{equation}
The limit exists also when replacing $\Prob(S_{k}<0)$ with $\Prob(S_{k}\leqslant 0),\,\Prob(S_{k}>0)$, or $\Prob(S_{k}\geqslant 0)$. Moreover, as shown by Doney \cite{Doney:95} for $0<\rho<1$ and by Bertoin and Doney \cite{BertoinDoney:97} for $\rho\in\{0,1\}$, \eqref{eq:Spitzer condition} always implies the stronger convergence
$$ \lim_{n\to\infty}\Prob(S_{n}<0)\ =\ \rho. $$
For our purposes, a more important consequence of \eqref{eq:Spitzer condition} for $0<\rho<1$ is
\begin{equation}\label{eq:Elog sl finite}
\Erw\log\sl\,<\,\infty
\end{equation}
which follows directly from the stronger tail property
\begin{equation}\label{eq:tail sl}
\Prob(\sl>n)\ \sim \ \frac{\elll_{\rho}(n)}{\Gamma(1-\rho)\,n^{\rho}}\quad\text{as }n\to\infty,
\end{equation}
valid in this case, see \cite[Thm.~8.9.12]{BingGolTeug:89}. Here $\Gamma$ denotes the Eulerian Gamma function and $\elll_{\rho}$ a slowly varying function which may be chosen as
\begin{equation}\label{eq:def ell_rho}
\elll_{\rho}(s)\ =\ \exp\left(\sum_{n\ge 1}\frac{(1-s^{-1})^{n}}{n}\big(\rho-\Prob(S_{n}<0)\big)\right),\quad s\in (1,\infty),
\end{equation}
and thus in fact as a constant if
\begin{equation}\label{eq:Spitzer series}
\sum_{n\ge 1}\frac{1}{n}\big(\rho-\Prob(S_{n}<0)\big)
\end{equation}
is convergent.

\begin{Theorem}\label{thm:main 1}
Given an RDE-chain with associated random vector $(M,Q)$ in $\R_{+}^{2}$ such that  \eqref{eq:basic assumption 1}, \eqref{eq:basic assumption 2}, and \eqref{eq:Spitzer condition} for some $\rho\in (0,1)$ hold, suppose that
\begin{gather}
\Prob(\log Q>t|M)\ \le\ \ovl{F}(t)\quad\text{a.s.}\label{eq:upper bound tail logQ}
\intertext{for all sufficiently large $t$ and a survival function $\ovl{F}$ satisfying}
\lim_{t\to\infty}t\,\ovl{F}(t)^{\rho}\,\elll_{\rho}(1/\ovl{F}(t))\ =\ 0.\label{eq:limsup Fbar}
\end{gather}
Then the chain is null-recurrent and possesses an essentially unique invariant Radon measure.
\end{Theorem}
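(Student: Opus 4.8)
The plan is to exploit the regeneration of the driving walk $S_n=\log\Pi_n$ at its strictly descending ladder epochs $\sl_j$, and to show that the RDE-chain sampled along these epochs is a \emph{positive recurrent} contractive RDE, so that the full chain is recurrent but inherits null recurrence from the heavy-tailed gaps between ladder epochs. Two facts drive everything. First, the forward representation
\[ X_{\sl_j}^{x}=e^{S_{\sl_j}}x+\sum_{k=1}^{\sl_j}e^{S_{\sl_j}-S_k}Q_k \]
splits at $\sl_{j-1}$, after factoring out $e^{S_{\sl_j}-S_{\sl_{j-1}}}$, into the exact recursion
\[ X_{\sl_j}=c_j\,X_{\sl_{j-1}}+R_j,\quad c_j=e^{S_{\sl_j}-S_{\sl_{j-1}}},\quad R_j=\sum_{k=\sl_{j-1}+1}^{\sl_j}e^{S_{\sl_j}-S_k}Q_k, \]
where $(c_j,R_j)_{j\ge1}$ are i.i.d.\ and independent of $X_{\sl_{j-1}}$ by regeneration of $S_n$ at ladder epochs; since $\log c_j=S_{\sl_j}-S_{\sl_{j-1}}<0$ with $\Erw\log c_1=\Erw S_{\sl}<0$, the embedded chain is a contractive RDE. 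Second, the contraction identity $|X_n^x-X_n^y|=e^{S_n}|x-y|$ together with the oscillation of $S_n$ yields $\liminf_n|X_n^x-X_n^y|=0$ a.s., a local contractivity I would reserve for the uniqueness statement.

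Unfolding the recursion and reversing the i.i.d.\ increments shows that $X_{\sl_j}$ is, up to the vanishing term $e^{S_{\sl_j}}x$, distributed as the increasing partial sums of the perpetuity $\sum_{i\ge1}e^{S_{\sl_{i-1}}}R_i$, so the embedded chain is tight — indeed converges in law to a stationary distribution $\pi$ — exactly when this perpetuity is a.s.\ finite. Establishing that finiteness from \eqref{eq:upper bound tail logQ}–\eqref{eq:limsup Fbar} is the heart of the matter. I would proceed via the sharp convergence criterion for such perpetuities (of Goldie–Maller type, as adapted in \cite{AlsBurIks:17}), which reduces a.s.\ finiteness to a Borel–Cantelli/summability condition on the tail of the single-excursion innovation $R_1=\sum_{k\le\sl}e^{S_\sl-S_k}Q_k$. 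A tail estimate then takes over: \eqref{eq:upper bound tail logQ} dominates the conditional tail of $\log Q$ by $\ovl{F}$, and the regular variation $\Prob(\sl>n)\sim\elll_\rho(n)/(\Gamma(1-\rho)n^\rho)$ from \eqref{eq:tail sl} feeds a Karamata-type computation bounding $\Prob(\log_+R_1>t)$ in terms of $\ovl{F}(t)^\rho\,\elll_\rho(1/\ovl{F}(t))$; condition \eqref{eq:limsup Fbar}, with $\Erw\log\sl<\infty$ from \eqref{eq:Elog sl finite} controlling the excursion lengths, is precisely what makes the relevant series converge. This yields $\liminf_j X_{\sl_j}<\infty$ a.s.\ and hence recurrence of the full chain from every starting point.

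With the embedded stationary law $\pi$ in hand, I would realise the invariant Radon measure of the full chain by the occupation formula over one ladder cycle,
\[ \nu(B)=\Erw_{\pi}\Big[\sum_{n=0}^{\sl-1}\mathbf{1}_B(X_n)\Big], \]
which is invariant by the regenerative structure and, after a routine single-excursion estimate (again controlled by the tail bound), finite on compacta, hence Radon. Its total mass equals $\Erw_\pi\sl=\Erw\sl=\infty$, so no finite invariant measure exists and the chain is \emph{null}, not positively, recurrent. Essential uniqueness I would obtain from the contraction identity of the first paragraph: coupling two invariant Radon measures through the common sequence $(M_k,Q_k)$ and using $|X_n^x-X_n^y|=e^{S_n}\,|x-y|\to0$ along a subsequence forces them to coincide up to a multiplicative constant.

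The decisive difficulty is the tail estimate of the second paragraph, namely proving that \eqref{eq:limsup Fbar} is the \emph{sharp} balance between the tail $\ovl{F}$ of $\log Q$ and the excursion-length tail \eqref{eq:tail sl} that secures a.s.\ convergence of the ladder perpetuity. The obstruction is that $R_1$ intertwines the fluctuations of $S$ near its running minimum with the heavy tail of $\log Q$ evaluated at the random, size-biased ladder time, so the bound on $\Prob(\log_+R_1>t)$ must be produced at the level of a single excursion and cannot be read off from a naive union bound over the infinite-mean number of steps in a cycle.
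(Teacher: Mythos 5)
Your skeleton --- sampling at the strictly descending ladder epochs, recognizing $(c_j,R_j)$ as i.i.d.\ copies of the contractive pair $(\Ml,\Ql)$, and isolating the tail of $\Ql$ in terms of $\ovl{F}(t)^{\rho}\,\elll_{\rho}(1/\ovl{F}(t))$ as the technical core --- is exactly the paper's route. The genuine gap is your middle step: you assert that the embedded chain is \emph{positive} recurrent, i.e.\ that the ladder perpetuity $\sum_{i\ge 1}e^{S_{\sl_{i-1}}}R_i$ converges a.s.\ to a stationary law $\pi$, and you then build the invariant Radon measure by the cycle formula $\nu(B)=\Erw_{\pi}\big[\sum_{n<\sl}\1_{B}(X_n)\big]$. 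By the Goldie--Maller criterion, a.s.\ convergence of that perpetuity requires $I_{\Ql}<\infty$ (essentially $\Erw\log_{+}\Ql<\infty$ when $\Erw|S_{\sl}|<\infty$; note Theorem \ref{thm:main 1} does not even assume \eqref{eq:ES_sl finite}). But \eqref{eq:limsup Fbar} only delivers $t\,\Prob(\log\Ql>t)\to 0$, which is compatible with $\Erw\log_{+}\Ql=\infty$ (take $\ovl{F}$ so that $\Prob(\log\Ql>t)$ behaves like $1/(t\log t)$): then the perpetuity diverges a.s., no $\pi$ exists, and both your tightness claim and your occupation formula collapse. The paper closes exactly this case by invoking the recurrence criterion for \emph{divergent} contractive RDEs, \cite[Theorem 3.1(i)]{AlsBurIks:17}, which needs only $\limsup_{t\to\infty}t\,\Prob(\log\Ql>t)<-\Erw\log\Ml$ (the left side being $0$ here by Lemma \ref{lem:tail lemma 1}(a)), and then obtains existence and essential uniqueness of the invariant Radon measure from the general theory of recurrent locally contractive chains (Lemma \ref{lem:3}) rather than from a regenerative cycle decomposition; null recurrence follows because a critical chain is never positive recurrent.

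Two smaller remarks. First, the estimate you flag as the ``decisive difficulty'' is in fact obtained in Lemma \ref{lem:tail lemma 1}(a) by essentially the computation you distrust: from $\log\Ql\le\log\sl+\max_{1\le k\le\sl}\log Q_k$, conditioning on $M_1,\dots,M_{\sl}$ and using \eqref{eq:upper bound tail logQ} gives $\Prob(\max_{k\le\sl}\log Q_k>t)\le 1-\Erw(1-\ovl{F}(t))^{\sl}\sim\ovl{F}(t)^{\rho}\,\elll_{\rho}(1/\ovl{F}(t))$ by the Tauberian result of Bingham--Goldie--Teugels, with $\Erw\log\sl<\infty$ (which follows from \eqref{eq:Spitzer condition}, see \eqref{eq:Elog sl finite}) absorbing the $\log\sl$ term. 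Second, only this upper bound is needed for Theorem \ref{thm:main 1}; the sharpness of the balance between $\ovl{F}$ and \eqref{eq:tail sl}, which you present as the goal, is only relevant for the transience half of Theorem \ref{thm:main 2}.
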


\begin{Rem}\rm
For a comparison of this result with the one by Babillot et al. mentioned earlier, we need to compare their condition \eqref{eq:Lyapunov condition}, which entails $\rho=\frac{1}{2}$ because $(S_{n})_{n\ge 0}$ satisfies the central limit theorem, with our condition \eqref{eq:limsup Fbar} for this $\rho$. The comparison becomes easier when noting that
\begin{equation}\label{eq:cond Lyapunov condition}
\Erw\big(\log_{+}^{2+\eps}Q|M\big)\ \le\ K\quad\text{a.s.}
\end{equation}
for some constants $\eps,K>0$ provides a sufficient condition for \eqref{eq:limsup Fbar}. Then one can see that, in essence, our result does not need an extra moment condition on $\log M$ while their result makes no assumption on the joint law of $(M,Q)$.
\end{Rem}

\begin{Theorem}\label{thm:main 2}
Given an RDE-chain with associated random vector $(M,Q)$ in $\R_{+}^{2}$ satisfying  \eqref{eq:basic assumption 1}, \eqref{eq:basic assumption 2}, \eqref{eq:ES_sl finite}, and \eqref{eq:Spitzer condition} for some $\rho\in (0,1)$, the following assertions hold true:
\begin{itemize}\itemsep2pt
\item[(a)] If $Q$ satisfies \eqref{eq:upper bound tail logQ}
for all sufficiently large $t$ and a survival function $\ovl{F}$ such that
\begin{equation}\label{eq2:limsup Fbar}
s^{*}(\ovl{F})\ :=\ \limsup_{t\to\infty}t\,\ovl{F}(t)^{\rho}\,\elll_{\rho}(1/\ovl{F}(t))\ <\ \kappa,
\end{equation}
then the chain is null-recurrent and possesses an essentially unique invariant Radon measure.
\item[(b)] If $Q$ satisfies
\begin{gather}
\Prob(\log Q>t|M)\ \ge\ \ovl{G}(t)\quad\text{a.s.}\label{eq:lower bound tail logQ}
\intertext{for all sufficiently large $t$ and a survival function $\ovl{G}$ such that}
s_{*}(\ovl{G})\ :=\ \liminf_{t\to\infty}t\,\ovl{G}(t)^{\rho}\,\elll_{\rho}(1/\ovl{G}(t))\ >\ \kappa,\label{eq:liminf Gbar}
\end{gather}
then the chain is transient.
\item[(c)] If $Q$ satisfies both \eqref{eq:upper bound tail logQ} and \eqref{eq:lower bound tail logQ} for all sufficiently large $t$ and survival functions $\ovl{F},\ovl{G}$ such that $0<s_{*}(\ovl{G})\le s^{*}(\ovl{F})<\infty$, then there exists a critical exponent
$$ p_{0}\ \in\ \left[\frac{\kappa}{s^{*}(\ovl{F})},\frac{\kappa}{s_{*}(\ovl{G})}\right] $$
such that an RDE-chain with associated random vector $(M,Q^{p})$ is recurrent for $0\le p<p_{0}$ and transient for $p_{0}<p<\infty$.
\end{itemize}
\end{Theorem}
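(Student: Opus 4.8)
The plan is to reduce the recurrence/transience dichotomy to the behaviour of the chain sampled at the strict descending ladder epochs $\sl_{1}<\sl_{2}<\cdots$ of $(S_{n})$, at which $(S_{n})$ attains its successive minima and the chain has its best opportunity to be small. Starting from $X_{0}=0$ and writing $X_{n}=\sum_{k=1}^{n}e^{S_{n}-S_{k}}Q_{k}$, the sampled values obey the contractive recursion $X_{\sl_{j}}=e^{H_{j}}X_{\sl_{j-1}}+\tilde Q_{j}$, where $H_{j}:=S_{\sl_{j}}-S_{\sl_{j-1}}$ are i.i.d.\ with negative mean $-\kappa$ and $\Erw|H_{1}|=\kappa<\infty$ (here assumption \eqref{eq:ES_sl finite} enters), and $\tilde Q_{j}:=\sum_{k=\sl_{j-1}+1}^{\sl_{j}}e^{S_{\sl_{j}}-S_{k}}Q_{k}$ are i.i.d.\ block inputs. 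Since $e^{S_{\sl_{j}}}\to0$ the initial state is immaterial, and the chain is recurrent iff $\liminf_{j}X_{\sl_{j}}<\infty$ and transient iff $X_{\sl_{j}}\to\infty$. I would distinguish the two alternatives by a Borel--Cantelli analysis of $\sum_{j}\Prob(X_{\sl_{j}}\le K)$ for this embedded chain: divergence forces infinitely many returns to $[0,K]$ (recurrence), convergence forces escape (transience).

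The technical heart is the tail $\ovl H(u):=\Prob(\log\tilde Q_{1}>u)$ of a single block input. A large $\tilde Q_{1}$ is produced by one large $Q_{k}$ inside the excursion, so $\ovl H(u)\approx\Erw\sum_{k\le\sl}\ovl F(u+S_{k}-S_{\sl})$; evaluating this against the pre-minimum occupation measure of $(S_{k})$, whose polynomial growth together with the ladder-length tail \eqref{eq:tail sl} supplies the exponent $\rho$ and the slowly varying factor $\elll_{\rho}$, converts the one-dimensional tail $\ovl F$ into the functional $t\,\ovl F(t)^{\rho}\elll_{\rho}(1/\ovl F(t))$. Feeding $\ovl H$ into the Borel--Cantelli sum, and crucially controlling the fluctuations of $H_{1}+\cdots+H_{m}$ about $-\kappa m$, the series $\sum_{j}\Prob(X_{\sl_{j}}\le K)$ diverges precisely when $s^{*}(\ovl F)<\kappa$ and converges when $s_{*}(\ovl G)>\kappa$. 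This gives (a) recurrence under \eqref{eq2:limsup Fbar} and (b) transience under \eqref{eq:liminf Gbar}; in case (a) the essentially unique invariant Radon measure is then produced from the recurrent chain and the ladder regeneration structure exactly as in \cite{BabBouElie:97,AlsBurIks:17}, while $\Erw\sl=\infty$ excludes positive recurrence. I expect this two-dimensional estimate --- turning $\ovl F$ into $t\,\ovl F(t)^{\rho}\elll_{\rho}(1/\ovl F(t))$ through the ladder occupation measure while handling the ladder-height fluctuations --- to be the main obstacle for (a) and (b).

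Part (c) combines (a) and (b) with a scaling and a monotonicity argument. Replacing $Q$ by $Q^{p}$ replaces $\log Q$ by $p\log Q$, so the conditional bounds become $\ovl F(\cdot/p)$ and $\ovl G(\cdot/p)$, and the substitution $t=pu$ in \eqref{eq2:limsup Fbar} and \eqref{eq:liminf Gbar} yields the exact scalings $s^{*}(\ovl F(\cdot/p))=p\,s^{*}(\ovl F)$ and $s_{*}(\ovl G(\cdot/p))=p\,s_{*}(\ovl G)$. By (a) the $(M,Q^{p})$-chain is recurrent once $p\,s^{*}(\ovl F)<\kappa$, i.e.\ for $p<\kappa/s^{*}(\ovl F)$, and by (b) transient once $p\,s_{*}(\ovl G)>\kappa$, i.e.\ for $p>\kappa/s_{*}(\ovl G)$. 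To upgrade these one-sided ranges into a single threshold $p_{0}$ with recurrence below and transience above, I would show that recurrence is monotone in $p$; this is the delicate point, since $Q\mapsto Q^{p}$ is not pointwise monotone. I would resolve it in two steps: first, the embedded analysis shows the dichotomy depends on $(M,Q)$ only through the conditional upper tail $\Prob(\log Q>t\mid M)$ as $t\to\infty$ --- values $Q\le1$ never create a boost --- so one may replace $Q$ by $\widehat Q:=Q\vee1$ without changing recurrence or transience; second, as $\widehat Q\ge1$ makes $p\mapsto\widehat Q^{p}$ pointwise nondecreasing and $X_{n}$ is nondecreasing in each input (because $M_{k}\ge0$), the chains obey $X_{n}(\widehat Q^{p})\le X_{n}(\widehat Q^{p'})$ pathwise for $p\le p'$. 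Hence transience propagates upward in $p$, so $p_{0}:=\sup\{p\ge0:\text{the }(M,Q^{p})\text{-chain is recurrent}\}$ is a genuine phase-transition point, and the bounds from (a) and (b) place it in $[\kappa/s^{*}(\ovl F),\kappa/s_{*}(\ovl G)]$, as asserted.
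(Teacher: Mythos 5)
Your proposal follows essentially the same route as the paper: reduction to the embedded ladder RDE-chain $(\Xln)_{n\ge 0}$ with mean contraction $\Erw\log\Ml=-\kappa$, a tail lemma turning $\ovl{F}$ into $t\,\ovl{F}(t)^{\rho}\elll_{\rho}(1/\ovl{F}(t))$ via $1-\Erw(1-\ovl{F}(t))^{\sl}$ and the Spitzer/ladder-epoch asymptotics, and for (c) the replacement $Q\mapsto Q\vee 1$ (justified in the paper by the bounded difference $0\le Y_{p,\sln(\gamma)}-X_{p,\sln(\gamma)}\le(1-\gamma)^{-1}$) to obtain monotonicity in $p$ and hence a threshold $p_{0}$. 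The only real divergence is that where you propose to re-derive the recurrence/transience criterion for the contractive ladder chain by a Borel--Cantelli analysis of $\sum_{j}\Prob(X_{\sl_{j}}\le K)$ --- which you correctly flag as the main obstacle, and which would additionally need the local-contractivity equivalences of Proposition \ref{recurrent-transient} to pass from divergence of that series to recurrence --- the paper simply invokes Theorem 3.1 of \cite{AlsBurIks:17}, where that dichotomy ($\limsup$ resp. $\liminf$ of $t\,\Prob(\log\Ql>t)$ versus $\kappa$) is already established.
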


For later use, we note that the survival functions $\ovl{F}$ and $\ovl{G}$ above may easily be modified in such a way that \eqref{eq:upper bound tail logQ} and \eqref{eq:lower bound tail logQ} remain valid while \eqref{eq2:limsup Fbar} and \eqref{eq:liminf Gbar} hold in the stronger form
\begin{gather*}
s(\ovl{F})\,:=\,\lim_{t\to\infty}t\,\ovl{F}(t)^{\rho}\,\elll_{\rho}(1/\ovl{F}(t))\,<\,\kappa\quad\text{and}\quad
s(\ovl{G})\,>\,\kappa,
\end{gather*}
respectively. The latter entails that $\ovl{F}(t)$ and $\ovl{G}(t)$ are in fact regularly varying at $\infty$ with index $-1/\rho\in (-\infty,-1)$ (see \cite[Prop.~1.5.15]{BingGolTeug:89}), and we may also assume that they are smooth, convex and with monotone derivatives for sufficiently large $t$. In particular, $\ovl{F}'(t)$ and $\ovl{G}'(t)$ are negative, increasing, concave and regularly varying with index $-(1+\rho)/\rho\in (-\infty,-2)$ for large $t$, see \cite[Thms.~1.8.2 and 1.6.3]{BingGolTeug:89}).

\vspace{.1cm}
It is also clear that the \emph{conditional} tail conditions \eqref{eq:upper bound tail logQ} and \eqref{eq:lower bound tail logQ} turn into ordinary unconditional ones if $M$ and $Q$ are independent. As for Theorem \ref{thm:main 2}, it is worthwile to give an explicit formulation of the result in this case including an improvement when $t\,\Prob(\log Q>t)^{\rho}\elll_{\rho}(1/\Prob(\log Q>t))$ converges as $t\to\infty$.

\begin{Cor}\label{cor:main 2}
Given the situation of Theorem \ref{thm:main 2}, suppose further that $M,Q$ are independent and that $\ovl{F}(t):=\Prob(\log Q>t)$, the survival function of $\log Q$, satisfies $s_{*}(\ovl{F})=s^{*}(\ovl{F})=:s(\ovl{F})\in [0,\infty]$. Then the critical exponent $p_{0}$ equals $\kappa/s(\ovl{F})$, in other words, an RDE-chain with associated random vector $(M,Q^{p})$ is recurrent for $0\le p<\kappa/s(\ovl{F})$ and transient for $\kappa/s(\ovl{F})<p<\infty$. In particular, an RDE-chain with associated $(M,Q)$ is recurrent if $s(\ovl{F})<\kappa$ and transient if $s(\ovl{F})>\kappa$.
\end{Cor}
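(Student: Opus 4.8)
The plan is to reduce the entire statement to Theorem \ref{thm:main 2} via the elementary scaling behaviour of the tail functional under the map $Q\mapsto Q^{p}$. First I would note that when $M$ and $Q$ are independent the conditioning in \eqref{eq:upper bound tail logQ} and \eqref{eq:lower bound tail logQ} is vacuous, so both inequalities hold \emph{with equality} for the single survival function $\ovl{F}(t)=\Prob(\log Q>t)$. One may therefore take $\ovl{F}=\ovl{G}$, and the hypotheses of Theorem \ref{thm:main 2}(a),(b) collapse to conditions on this one function.

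Next, for a fixed $p>0$ I would pass to the RDE-chain associated with $(M,Q^{p})$. Since $\log Q^{p}=p\log Q$, its $Q$-tail is $\ovl{F}_{p}(t):=\Prob(\log Q^{p}>t)=\ovl{F}(t/p)$, while the remaining hypotheses \eqref{eq:basic assumption 1}, \eqref{eq:basic assumption 2}, \eqref{eq:ES_sl finite} and \eqref{eq:Spitzer condition} are untouched, as they involve only the law of $M$ (and $\Prob(Q^{p}=0)=\Prob(Q=0)<1$). The key computation is the scaling identity: substituting $u=t/p$,
\begin{equation*}
s(\ovl{F}_{p})\ =\ \lim_{t\to\infty}t\,\ovl{F}(t/p)^{\rho}\,\elll_{\rho}(1/\ovl{F}(t/p))\ =\ p\lim_{u\to\infty}u\,\ovl{F}(u)^{\rho}\,\elll_{\rho}(1/\ovl{F}(u))\ =\ p\,s(\ovl{F}),
\end{equation*}
where the standing hypothesis $s_{*}(\ovl{F})=s^{*}(\ovl{F})=s(\ovl{F})$ guarantees existence of the right-hand limit and hence of the left-hand one, so that $s_{*}(\ovl{F}_{p})=s^{*}(\ovl{F}_{p})=p\,s(\ovl{F})$.

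With this in hand I would invoke the dichotomy directly. Theorem \ref{thm:main 2}(a) applied to $(M,Q^{p})$ yields recurrence whenever $s^{*}(\ovl{F}_{p})=p\,s(\ovl{F})<\kappa$, i.e.\ whenever $p<\kappa/s(\ovl{F})$, while Theorem \ref{thm:main 2}(b) yields transience whenever $s_{*}(\ovl{F}_{p})=p\,s(\ovl{F})>\kappa$, i.e.\ whenever $p>\kappa/s(\ovl{F})$. Hence the critical exponent equals $p_{0}=\kappa/s(\ovl{F})$, and the concluding ``in particular'' assertion is simply the case $p=1$.

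The only genuinely delicate point, which I expect to be the main obstacle, is the behaviour at the degenerate endpoints $s(\ovl{F})\in\{0,\infty\}$, where one cannot appeal to Theorem \ref{thm:main 2}(c) (whose hypotheses require $0<s_{*}\le s^{*}<\infty$) and must argue from parts (a),(b) directly. Here I would use that $\kappa>0$, which holds because oscillation forces $S_{\sl}<0$ a.s.\ and hence $\kappa=\Erw|S_{\sl}|>0$. If $s(\ovl{F})=0$ then $s(\ovl{F}_{p})=0<\kappa$ for every finite $p$, so part (a) gives recurrence on all of $[0,\infty)$, matching $\kappa/s(\ovl{F})=\infty$; if $s(\ovl{F})=\infty$ then $s(\ovl{F}_{p})=\infty>\kappa$ for every $p>0$, so part (b) gives transience throughout $(0,\infty)$, matching $\kappa/s(\ovl{F})=0$. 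Apart from this bookkeeping at the endpoints, the argument is a direct transcription of the scaling identity through the recurrence/transience criterion, and no substantial difficulty arises beyond Theorem \ref{thm:main 2} itself.
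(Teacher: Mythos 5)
Your argument is correct and follows essentially the same route the paper intends: the Corollary is left without a separate proof precisely because, for independent $M,Q$, one takes $\ovl{F}=\ovl{G}=\Prob(\log Q>\cdot)$ and the interval $[\kappa/s^{*}(\ovl{F}),\kappa/s_{*}(\ovl{G})]$ from Theorem \ref{thm:main 2}(c) collapses to the point $\kappa/s(\ovl{F})$, with the scaling identity $s(\ovl{F}(\cdot/p))=p\,s(\ovl{F})$ being exactly the computation used in the paper's proof of part (c). Your extra care at the endpoints $s(\ovl{F})\in\{0,\infty\}$ (where part (c) is not applicable and one argues directly from parts (a) and (b), using $\kappa\in(0,\infty)$) is a sensible and correct addition.
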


Besides the basic assumption $\Pi_{n}\to 0$ a.s. (negative divergence of $(S_{n})_{n\ge 0}$), the conditions provided by Goldie and Maller \cite[(2.1) of Thm.~2.1]{GolMal:00} for the positive recurrence and in \cite[Thms.~3.1 and 3.2]{AlsBurIks:17} for the null-recurrence or transience of an RDE-chain with associated random vector $(M,Q)$ do only involve the unconditional distributions of $M$ and $Q$. It is therefore natural to ask whether substitutes of that kind for \eqref{eq:upper bound tail logQ} and \eqref{eq:lower bound tail logQ} may also be given here. Our last two theorems provide answers that are rather pointing in another direction. In essence, the first one provides null-recurrence under a strong condition on the relation between $M$ and $Q$ but no tail condition beyond, while the second result shows that both transience and null-recurrence may occur when the laws of $M$ and $Q$ are fixed (here to be equal) but the dependence between them varies.

\begin{Theorem}\label{thm:main 3}
An RDE-chain with associated random vector $(M,Q)$ in $\R_{+}^{2}$ satisfying \eqref{eq:basic assumption 1}, \eqref{eq:basic assumption 2}, \eqref{eq:Elog sl finite}, and
\begin{equation}\label{eq:Q<=aM vee b}
Q\,\le\,aM+b\quad\text{for some }a,b>0
\end{equation}
is null-recurrent and possesses an essentially unique invariant Radon measure.
\end{Theorem}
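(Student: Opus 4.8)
The plan is to reduce the problem, by a monotone comparison, to the case of a \emph{constant} additive term, for which non-transience can be read off from the oscillation of $(S_n)_{n\ge0}$, and then to feed this into the general recurrence criterion underlying Theorems~\ref{thm:main 1} and \ref{thm:main 2}. Set $c:=a+b>0$ and let $(Z_n)_{n\ge0}$ be the RDE-chain with associated vector $(M,c)$ started at $Z_0:=X_0+a$, that is $Z_n=M_nZ_{n-1}+c$. Since $x\mapsto M_nx+Q_n$ is increasing and $Q_n\le aM_n+b$ by \eqref{eq:Q<=aM vee b}, an immediate induction gives
\[
X_n+a\ =\ M_n X_{n-1}+Q_n+a\ \le\ M_n(X_{n-1}+a)+c\ \le\ M_nZ_{n-1}+c\ =\ Z_n
\]
for all $n$, hence $X_n\le Z_n$ pathwise. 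Consequently, whenever $(Z_n)_{n\ge0}$ returns to a bounded set infinitely often, so does $(X_n)_{n\ge0}$, and it therefore suffices to rule out $Z_n\to\infty$. The decisive gain is that the tail condition on $Q$ has disappeared entirely, the additive term of $(Z_n)$ being the constant $c$.

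For the non-transience of $(Z_n)$ I would use the representation \eqref{eq:X_n and its dual}, which here reads $Z_n=e^{S_n}\big(Z_0+c\sum_{k=1}^n e^{-S_k}\big)$, so that $\log Z_n=S_n+\log\big(Z_0+c\sum_{k=1}^ne^{-S_k}\big)$. Evaluating along the strictly descending ladder epochs $\sl_{m}$, at which $S_{\sl_{m}}=\min_{0\le k\le\sl_{m}}S_k$, and writing $\sum_{k=1}^{\sl_{m}}e^{-S_k}=e^{-S_{\sl_{m}}}A_m$ with $A_m:=\sum_{k=1}^{\sl_{m}}e^{-(S_k-S_{\sl_{m}})}$, the $S_{\sl_{m}}$-terms cancel and one gets $\log Z_{\sl_{m}}\le\log\!\big(Z_0\,e^{S_{\sl_{m}}}+c\,A_m\big)$. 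Since $-S_{\sl_{m}}\to+\infty$ a.s., the first summand vanishes and everything hinges on controlling $A_m$. A time-reversal argument — the classical duality between the walk killed at a descending ladder epoch and the walk conditioned to stay non-positive — identifies the weak limit of $A_m$ with a perpetuity $\sum_{k\ge0}e^{\wt S_k}$, where $(\wt S_k)$ is the walk with increments distributed as $\log M$ conditioned to stay non-positive. Establishing that this limiting perpetuity is a.s.\ finite, and hence that $(\log Z_{\sl_{m}})_m$ is tight, yields $\liminf_nZ_n<\infty$ with positive probability and then, by the zero--one law for the tail event $\{\liminf_nZ_n<\infty\}$, almost surely. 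Thus neither $(Z_n)$ nor $(X_n)$ is transient.

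The matching lower bound $X_n\ge\Pi_nX_0=e^{S_n}X_0$ together with $\limsup_n\Pi_n=+\infty$ from \eqref{eq:basic assumption 2} shows $\limsup_nX_n=+\infty$ a.s.\ for $X_0>0$ (the case $X_0=0$ reducing to this after one step, as $\Prob(Q>0)>0$). Combining non-transience with this oscillation and invoking the general recurrence criterion used for Theorems~\ref{thm:main 1}--\ref{thm:main 2} — whose Radon property for the invariant measure rests precisely on $\Erw\log\sl<\infty$, i.e.\ \eqref{eq:Elog sl finite} — then delivers that $(X_n)_{n\ge0}$ is null-recurrent with an essentially unique invariant Radon measure. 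I expect the main obstacle to lie in the tightness of $A_m$: translating the descending-ladder-epoch conditioning into the conditioned walk $(\wt S_k)$ and proving a.s.\ finiteness of the limiting perpetuity under only the weak moment assumption \eqref{eq:Elog sl finite} (where, in the oscillating regime, the conditioned walk may drift to $-\infty$ only sublinearly) is where the fluctuation-theoretic machinery of \cite{AlsBurIks:17} must be brought to bear. A secondary point requiring care is checking that the comparison chain $(Z_n)$ and the general criterion apply verbatim \emph{without} the Spitzer assumption \eqref{eq:Spitzer condition}, which is available to Theorems~\ref{thm:main 1}--\ref{thm:main 2} but not assumed here.
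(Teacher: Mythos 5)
Your reduction to the comparison chain $Z_n=M_nZ_{n-1}+c$ with $c=a+b$ is correct (the induction $X_n+a\le Z_n$ checks out), and it is morally the same first move as the paper's: once the additive term is bounded by a constant, the one-block increment of the embedded ladder chain becomes $\Ql=\sum_{k=1}^{\sl}e^{S_{\sl}-S_k}Q_k\le(a+b)\sl$, because $S_{\sl}\le S_k$ for all $0\le k\le\sl$ forces every factor $e^{S_{\sl}-S_k}$ to be at most $1$. The genuine gap is that you stop exactly where the proof has to be closed: you declare the a.s.\ finiteness of the limiting perpetuity of $A_m$ (equivalently, tightness of the ladder-sampled chain) to be ``the main obstacle'' requiring time reversal, walks conditioned to stay non-positive, and the fluctuation-theoretic machinery of \cite{AlsBurIks:17}. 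None of that is needed, and as written your argument does not actually establish the decisive fact. The quantity $A_m=\sum_{k=1}^{\sl_m}e^{S_{\sl_m}-S_k}$ is precisely the forward iteration of the embedded ladder RDE-chain with associated vector $(\Ml,\Ql)=(e^{S_{\sl}},\sum_{k=1}^{\sl}e^{S_{\sl}-S_k})$; this chain is strictly contractive ($\Ml<1$ by definition of $\sl$) and satisfies $\Erw\log_{+}\Ql\le\Erw\log_{+}\sl<\infty$ by \eqref{eq:Elog sl finite}, so the Goldie--Maller theorem \cite[Thm.~2.1]{GolMal:00} gives positive recurrence of the ladder chain in one stroke. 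That is the whole content of the step you left open, and it requires no conditioned-walk analysis. (Your worry about the conditioned walk drifting to $-\infty$ only sublinearly is a symptom of having chosen a harder route than necessary.)

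Two smaller points. First, Lemma \ref{lem:recurrence ladder chain} is the intended bridge from recurrence of the ladder chain back to recurrence of $(X_n)_{n\ge0}$; you instead appeal to a ``zero--one law for the tail event $\{\liminf_nZ_n<\infty\}$'', but that event is not obviously a tail event for the i.i.d.\ driving sequence, and the correct tool here is the dichotomy for locally contractive chains (Lemma \ref{lem:1} together with Proposition \ref{recurrent-transient}), or simply the criterion $\sum_m\Prob(Z_{\sl_m}\le K)=\infty$ once tightness is in hand. Second, your concern about the absence of the Spitzer condition \eqref{eq:Spitzer condition} is resolved by the same observation: the only input beyond the basic assumptions is \eqref{eq:Elog sl finite}, which is hypothesized directly, so nothing from Theorems \ref{thm:main 1}--\ref{thm:main 2} that depends on Spitzer's condition is invoked. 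Null-recurrence (rather than positive recurrence) of $(X_n)_{n\ge0}$ and the essentially unique invariant Radon measure then follow from local contractivity and Lemma \ref{lem:3}, exactly as you indicate.
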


\begin{Theorem}\label{thm:main 4}
Let $(X_{n})_{n\ge 0}$ be an RDE-chain with associated random vector $(M,Q)$ in $\R_{+}^{2}$ satisfying \eqref{eq:basic assumption 1},\eqref{eq:ElogM=0}, and $Q\eqdist M$, where $\eqdist$ means equality in law. Suppose also that $\Erw\log_{-}^{2}M<\infty$ and that the function $L(t):=t^{1/\rho}\,\Prob(\log M>t)$ is slowly varying for some $\rho\in (\frac{1}{2},1)$ with
\begin{equation*}
\lim_{t\to\infty}L(t)\ =\ \infty.
\end{equation*}
Then the chain is null-recurrent if $Q=M$, but it is transient if $M$ and $Q$ are independent.
\end{Theorem}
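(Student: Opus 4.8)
\emph{The plan} is to verify that the hypotheses place us squarely within the scope of Theorems~\ref{thm:main 2} and \ref{thm:main 3}, and then to run the two cases $Q=M$ and $M\perp Q$ through the recurrent and transient halves of that machinery. Throughout I would write $\ovl F(t):=\Prob(\log M>t)$, so that by assumption $\ovl F(t)=L(t)\,t^{-1/\rho}$ is regularly varying at $\infty$ with index $-1/\rho\in(-2,-1)$. First I would record that all structural hypotheses hold. Since $\Erw\log M=0$ (which is meaningful because the right tail index $\alpha:=1/\rho\in(1,2)$ gives $\Erw\log_+M<\infty$ and $\Erw\log_-^2M<\infty$ gives $\Erw\log_-M<\infty$), and since the right tail of $\log M$ dominates the left one (as $\alpha<2$), the oscillating walk $(S_n)_{n\ge0}$ lies in the domain of attraction of the spectrally positive $\alpha$-stable law. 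Its positivity parameter equals $1-1/\alpha=1-\rho$, whence $\Prob(S_n<0)\to\rho$ and Spitzer's condition \eqref{eq:Spitzer condition} holds with exactly the given $\rho\in(\tfrac12,1)$. Then \eqref{eq:tail sl}--\eqref{eq:Elog sl finite} yield $\Erw\log\sl<\infty$, while $\Erw\log_-^2M<\infty$ gives $\kappa=\Erw|S_\sl|<\infty$ by \eqref{eq:ES_sl finite}. I would also record here the sharper fact, which is the technical heart of the transient part, that in this stable domain of attraction the Spitzer series \eqref{eq:Spitzer series} converges, so that the slowly varying function $\elll_{\rho}$ in \eqref{eq:def ell_rho} may be taken asymptotically constant.

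For the recurrent alternative $Q=M$ the argument is short. Then $Q=M\le M+1$, so \eqref{eq:Q<=aM vee b} holds with $a=b=1$, while \eqref{eq:basic assumption 1} and \eqref{eq:basic assumption 2} are immediate from $\Prob(0<M<\infty)=1$ and \eqref{eq:ElogM=0}. Together with $\Erw\log\sl<\infty$ this is precisely the hypothesis of Theorem~\ref{thm:main 3}, which delivers null-recurrence and the essentially unique invariant Radon measure.

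For the transient alternative $M$ and $Q$ independent, the conditional tail conditions become unconditional, and I would invoke Corollary~\ref{cor:main 2} with $\ovl F(t)=\Prob(\log Q>t)=\Prob(\log M>t)$. The whole point is the value of $s(\ovl F)$. From $\ovl F(t)^\rho=L(t)^\rho\,t^{-1}$ one gets $t\,\ovl F(t)^\rho=L(t)^\rho$, and since $\elll_{\rho}(1/\ovl F(t))$ tends to a positive constant by the first paragraph while $L(t)\to\infty$, it follows that $s_*(\ovl F)=s^*(\ovl F)=s(\ovl F)=\infty$. As $\kappa<\infty$, Corollary~\ref{cor:main 2} (equivalently Theorem~\ref{thm:main 2}(b) with $\ovl G=\ovl F$ and $s_*(\ovl G)=\infty>\kappa$) yields transience. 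Intuitively, replacing the deterministic coupling $Q=M$, which forces the domination \eqref{eq:Q<=aM vee b}, by independence frees the heavy upper tail of $\log Q$ to act on its own and drive the chain to infinity.

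The genuine obstacle is the assertion in the first paragraph that $\elll_{\rho}$ may be taken asymptotically constant, equivalently that $\Prob(\sl>n)\sim c\,n^{-\rho}$ with a true constant $c$; without it the bare divergence $L(t)\to\infty$ does not force $s(\ovl F)=\infty$, since a slowly varying $\elll_{\rho}$ decaying to $0$ could annihilate the $L(t)^\rho$ blow-up. Establishing it requires the fluctuation theory of random walks in stable domains of attraction --- either a rate in $\Prob(S_n<0)\to\rho$ that is summable against $1/n$ in \eqref{eq:Spitzer series}, or directly the sharp ladder-epoch asymptotics --- and I expect this to be where the real work lies; the remaining steps are routine bookkeeping against the hypotheses of Theorems~\ref{thm:main 2}, \ref{thm:main 3} and Corollary~\ref{cor:main 2}.
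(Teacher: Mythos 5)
Your reduction is the same as the paper's: null-recurrence for $Q=M$ via Theorem \ref{thm:main 3} (with $Q=M\le M+1$), and transience for independent $M,Q$ via Theorem \ref{thm:main 2}(b) once one shows $s_{*}(\ovl F)=\infty$, where $t\,\ovl F(t)^{\rho}=L(t)^{\rho}\to\infty$ and the only issue is the behaviour of $\elll_{\rho}(1/\ovl F(t))$. You correctly isolate that issue as the crux. But your proposed resolution --- that in this stable domain of attraction the Spitzer series \eqref{eq:Spitzer series} converges, so that $\elll_{\rho}$ may be taken asymptotically constant --- is not merely unproven, it is \emph{false} under the hypotheses of the theorem. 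Membership in the domain of attraction only gives $\Prob(S_{n}<0)\to\rho$, with no rate; convergence of \eqref{eq:Spitzer series} is a strictly finer statement. In fact the paper proves that $\elll_{\rho}(s)\to\infty$ as $s\to\infty$, and since $\elll_{\rho}(s)$ is the exponential of the Abel means of the series $\sum_{n\ge1}n^{-1}\bigl(\rho-\Prob(S_{n}<0)\bigr)$, its divergence to $+\infty$ is incompatible with convergence of the Spitzer series. The divergence $L(t)\to\infty$ (a tail strictly heavier than the pure power $t^{-1/\rho}$) is precisely what forces this.

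The paper's actual argument for the missing step is the following chain, which you would need in place of your claim. First, $\Prob(\sg>n)\sim \ellg_{1-\rho}(n)/(\Gamma(\rho)n^{1-\rho})$ for the first strictly ascending ladder epoch, and the two slowly varying factors are tied together by $\ellg_{1-\rho}(s)\sim\theta/\elll_{\rho}(s)$ with $\theta=\exp\bigl(\sum_{n\ge1}n^{-1}\Prob(S_{n}=0)\bigr)<\infty$, so it suffices to show $\ellg_{1-\rho}(s)\to0$. Second, by Doney's 1982 theorem, $\ovl F(t)\sim t^{-1/\rho}L(t)$ is equivalent to $\Prob(\sg>n)\sim c/(L_{1/\rho}^{*}(n)\,n^{1-\rho})$, where the conjugate slowly varying function $L_{1/\rho}^{*}$ satisfies $L(s)^{-\rho}L_{1/\rho}^{*}(s^{1/\rho}/L(s))\to1$ and hence inherits $L_{1/\rho}^{*}(s)\to\infty$ from $L(s)\to\infty$. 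Comparing the two asymptotic forms of $\Prob(\sg>n)$ gives $\ellg_{1-\rho}(n)\to0$, hence $\elll_{\rho}(s)\to\infty$ and $s_{*}(\ovl F)=\infty$. So the conclusion you want is true, and even for a stronger reason than you conjectured (both factors in $t\,\ovl F(t)^{\rho}\elll_{\rho}(1/\ovl F(t))$ blow up), but the fluctuation-theoretic input is a sharp ladder-epoch asymptotic, not convergence of the Spitzer series; as written, your proof has a genuine gap at exactly the point you flagged.
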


The proofs of these results are presented in  Section \ref{sec:proofs main}. They combine techniques from \cite{AlsBurIks:17} and \cite{BabBouElie:97}, as for the latter, the most notable being the use of an embedded contractive RDE-chain obtained by observing the original one at the descending ladder epochs $\sl_{n}$ of $(S_{n})_{n\ge 0}$, see Sections \ref{sec:ladder chain}--\ref{sec:tail lemma}.

\section{Theoretical background and prerequisites}

Defining the random linear functions $\Psi_{n}(x):=Q_{n}+M_{n} x$ for $n\in\N$, the RDE-chain $(X_{n})_{n\ge 0}$ defined by \eqref{chain} may also be viewed as the \emph{forward iterated function system}
$$ X_{n}\ =\ \Psi_{n}(X_{n-1})\ =\ \Psi_{n}\circ\ldots\circ\Psi_{1}(X_{0}),\quad n\in\N, $$
where $\circ$ denotes as usual composition of maps, and opposed to its closely related counterpart
of \emph{backward iterations}
$$\wh{X}_{0}\ :=\ X_{0}\quad\text{and}\quad \wh{X}_{n}\ :=\ \Psi_{1}\circ\ldots\circ\Psi_{n}(X_{0}),\quad n\in\N. $$
The relation is established by the obvious fact that $X_{n}$ has the same law as $\wh{X}_{n}$ for each $n$, regardless of the law of $X_{0}$. Moreover, $\Psi_{1}\cdots\Psi_{n}$ is used as shorthand for $\Psi_{1}\circ\ldots\circ\Psi_{n}$ hereafter.

\vspace{.1cm}
Put $\R_{*}:=\R\backslash\{0\}$. Since the set of affine transformations $x\mapsto ax+b$, $(a,b)\in\R_{*}\times\R$, endowed with $\circ$ as composition law forms a non-Abelian group, which is in fact isomorphic to the group $(\G,\cdot)=(\R_{*}\times\R,\cdot)$ upon defining
$$ (a_{1},b_{1})\cdot (a_{2},b_{2})\ :=\ (a_{1}a_{2},a_{1}b_{2}+b_{1}) $$
for all $(a_{1},b_{1}),(a_{2},b_{2})\in\G$, we see that $(X_{n})_{n\ge 0}$ may also be interpreted as a (left) multiplicative random walk on $\G$.

\vspace{.1cm}
Yet another sequence associated with $(X_{n})_{n\ge 0}$ and called its \emph{dual} hereafter is defined by $\dual{X}_{0}:=X_{0}$ and
\begin{equation}\label{dual chain}
\dual{X}_{n}\ :=\ \frac{1}{M_{n}}\dual{X}_{n-1}\,+\,\frac{Q_{n}}{M_{n}}.
\end{equation}
for $n\in\N$. Plainly, $(\dual{X}_{n})_{n\ge 0}$ is an RDE-chain with associated $(M^{-1},M^{-1}Q)$ and properly defined on $\R$ whenever $\Prob(M=0)=0$ which is guaranteed by Condition \eqref{eq:basic assumption 1}. The associated backward iterations $\dual{\wh{X}}_{n}:=\dual{\Psi}_{1}\cdots\dual{\Psi}_{n}(X_{0})$ for $n\in\N$, where $\dual{\Psi}(x):=M^{-1}x+M^{-1}Q$, are given by
\begin{equation*}
\dual{X}_{n}\ =\ \Pi_{n}^{-1}X_{0}+\sum_{k=1}^{n}\Pi_{k}^{-1}Q_{k}\ =\ e^{-S_{n}}X_{0}+\sum_{k=1}^{n}e^{-S_{k}}Q_{k},
\end{equation*}
so that in particular $\dual{\wh{X}}_{n}^{0}=\sum_{k=1}^{n}\Pi_{k}^{-1}Q_{k}$. We then have the obvious relation
\begin{equation}\label{eq:X_n and its dual}
X_{n}\ =\ \Pi_{n}(X_{0}\,+\dual{\wh{X}}_{n}^{0})
\end{equation}
which will be used below to provide a very simple argument for local contractivity of $(X_{n})_{n\ge 0}$.

\vspace{.1cm}
Recall that $(X_{n})_{n\ge 0}$ is called \emph{locally contractive} if, for any compact set $K$ and all $x,y\in\R$,
\begin{equation}\label{eq: local contraction}
\lim_{n\to\infty}\big| X_{n}^{x}-X_{n}^{y}\big| \cdot\1_{\{X_{n}^{x}\in K \}}\ =\ 0\quad\text{a.s.}
\end{equation}
For critical RDE-chains with associated general $\R^{2}$-valued $(M,Q)$, the notion was introduced by Babillot et al. \cite[p.~479]{BabBouElie:97} and called \emph{global stability at finite distance}. Later, Benda, in his PhD thesis \cite{Benda:98b}, used it more systematically in the framework of general stochastic dynamical systems, see also the  recent article by Peign\'e and Woess \cite{PeigneWoess:11a} for further information. Regarding RDE-chains, the notion plays an important role also in \cite{Brofferio:03,BroBura:13,Buraczewski:07,AlsBurIks:17}.

The subsequent three results summarize the main properties of locally contractive Markov chains and have also been stated (and partially proved) in \cite{AlsBurIks:17}. The first one is actually quoted from \cite[Lemma 2.2]{PeigneWoess:11a} and states that a locally contractive chain is either transient or visits a large interval infinitely often (i.o.).

\begin{Lemma}\label{lem:1}
If $(X_{n})_{n\ge 0}$ is locally contractive, then the following dichotomy holds: either
\begin{align}
&\Prob\left(\lim_{n\to\infty}|X_{n}^{x}-x|=\infty\right)\ =\ 0\quad\text{for all }x\in\R\label{eq:3}
\shortintertext{or}
&\Prob\left(\lim_{n\to\infty}|X_{n}^{x}-x|=\infty\right)\ =\ 1\quad\text{for all }x\in\R.\label{eq:4}
\end{align}
\end{Lemma}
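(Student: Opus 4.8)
The plan is to establish a zero-one law for the tail event
$E_x := \{\lim_{n\to\infty}|X_n^x - x| = \infty\}$,
and then to argue that the probability of this event does not in fact depend on the starting point $x$. First I would observe that local contractivity (as formulated in \eqref{eq: local contraction}) couples the trajectories started from different initial points: for any compact $K$, on the event that $X_n^x$ returns to $K$ along a subsequence, the difference $|X_n^x - X_n^y|$ is forced to zero. This is the key mechanism that will make the dichotomy $x$-independent. Since the lemma is quoted from \cite[Lemma 2.2]{PeigneWoess:11a}, the cleanest route is to reproduce that argument rather than reinvent it, but the structure worth spelling out is the following two-step scheme: (i) for each fixed $x$, the event $E_x$ has probability $0$ or $1$; (ii) the common value of $\Prob(E_x)$ is the same for all $x$.

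For step (i), I would invoke a suitable $0$--$1$ law. The natural candidate is a Hewitt--Savage type argument: although $(X_n^x)_{n\ge 0}$ is not itself a sequence of i.i.d.\ increments, the driving sequence $(M_n,Q_n)_{n\ge 1}$ is i.i.d., and the event $E_x$ is a tail event with respect to this driving sequence. More precisely, one checks that whether $|X_n^x|\to\infty$ is unaffected by altering finitely many of the maps $\Psi_k$, because changing $\Psi_1,\dots,\Psi_m$ only replaces the initial condition fed into the remaining (almost surely contractive in the local sense) dynamics; by local contractivity the long-run behaviour is insensitive to this change. Hence $E_x$ lies in the tail $\sigma$-field of $(M_n,Q_n)_{n\ge 1}$, and Kolmogorov's $0$--$1$ law gives $\Prob(E_x)\in\{0,1\}$.

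For step (ii), suppose $\Prob(E_x)=1$ for some particular $x$ and let $y\in\R$ be arbitrary; I would show $\Prob(E_y)=1$. If, to the contrary, $\Prob(E_y)<1$, then by step (i) we would have $\Prob(E_y)=0$, so with positive probability $X_n^y$ returns infinitely often to some compact set $K$. But on that event, local contractivity \eqref{eq: local contraction} forces $|X_n^x - X_n^y|\to 0$ along the return times, and since $X_n^y\in K$ infinitely often this prevents $|X_n^x - x|\to\infty$, contradicting $\Prob(E_x)=1$. The symmetric argument handles the case $\Prob(E_x)=0$. Thus the value of $\Prob(E_x)$ is constant in $x$ and equal to either $0$ or $1$, which is exactly the stated dichotomy between \eqref{eq:3} and \eqref{eq:4}.

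I expect the main obstacle to be the rigorous verification that $E_x$ is a genuine tail event, i.e.\ the claim that resetting finitely many initial maps does not change whether the orbit escapes to infinity. This requires care because $E_x$ is phrased in terms of $|X_n^x - x|\to\infty$, which involves both escape and the fixed reference point $x$; one must argue that the limit behaviour of $|X_n^x|$ is governed by the local-contraction property \eqref{eq: local contraction} applied with a compact set absorbing the effect of the modified initial segment. Once this measurability/tail-triviality point is secured, steps (i) and (ii) are short, and the coupling supplied by \eqref{eq: local contraction} does the essential work in making the dichotomy uniform over starting points.
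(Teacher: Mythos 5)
Your argument is sound, but note first that the paper offers no proof of this lemma at all: it is quoted verbatim from Peign\'e and Woess \cite[Lemma 2.2]{PeigneWoess:11a}, so the only meaningful comparison is with their argument. Their route to the zero--one part differs from yours: they work with the bounded harmonic function $u(x)=\Prob\bigl(\lim_{n}|X_{n}^{x}|=\infty\bigr)$, show via the coupling you describe that $u$ is constant, and then conclude from the martingale convergence $u(X_{n}^{x})=\Prob(E_{x}\mid\cF_{n})\to\1_{E_{x}}$ that the constant is $0$ or $1$; you instead place $E_{x}$ in the tail $\sigma$-field of $(M_{n},Q_{n})_{n\ge1}$ and invoke Kolmogorov. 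Both work, and both rest on the same single input, namely that for deterministic $z,z'$ the escape events of the chains started at $z$ and $z'$ agree up to a null set. Two points should be made explicit in your version. First, in step (ii) the indicator in \eqref{eq: local contraction} sits on $\{X_{n}^{x}\in K\}$, so to exploit returns of $X_{n}^{y}$ you must apply the definition with the roles of $x$ and $y$ exchanged (legitimate, since it holds for all pairs), and you should exhaust $\R$ by the compacts $[-m,m]$ because the compact set visited infinitely often is random. Second, your tail-triviality step feeds the \emph{random} point $X_{m}^{x}$ into the shifted dynamics, while the null set in the coupling claim a priori depends on the pair of deterministic starting points; this is repaired by conditioning on $\cF_{m}$ and using independence of $X_{m}^{x}$ from the post-$m$ increments, so that $\Prob(E_{x}\,\triangle\,F_{m}\mid\cF_{m})=g(X_{m}^{x})$ with $g(z)=\Prob(F_{m}(z)\,\triangle\,F_{m}(0))\equiv0$, where $F_{m}(z)$ denotes escape of the chain driven by $\Psi_{m+1},\Psi_{m+2},\dots$ started at $z$. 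With these two points spelled out your proof is complete; the harmonic-function version is marginally shorter because it delivers the zero--one law and the $x$-independence in one stroke.
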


The chain is called \emph{recurrent} if there exists a nonempty closed set $L\subset\R$ such that $\Prob(X_{n}^{x}\in U\text{ i.o.})=1$ for every $x\in L$ and every open set $U$ that intersects $L$.
\vspace{.1cm}
A proof of the next lemma can be found in \cite[Thm.~5.8]{Benda:98b} and \cite[Prop.~2.7 and Thm.~2.13]{PeigneWoess:11a}, see also \cite[Thm.~3.3]{BabBouElie:97}.

\begin{Lemma}\label{lem:3}
If $(X_{n})_{n\ge 0}$ is locally contractive and recurrent, it possesses a unique (up to a multiplicative constant) invariant Radon measure $\nu$.
\end{Lemma}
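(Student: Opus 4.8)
The plan is to establish existence and uniqueness separately, with local contractivity doing the real work for uniqueness. First note that recurrence, via Lemma \ref{lem:1}, excludes the transient alternative \eqref{eq:3}: from every $x$ in the recurrent set $L$ the chain returns infinitely often to any open set meeting $L$, and in particular it visits some fixed compact interval $I\subset L$ infinitely often, so the chain is conservative. For existence I would build the invariant measure from the induced (return) chain on $I$. Writing $\tau:=\inf\{n\ge 1:X_n\in I\}$ for the first return time, which is a.s.\ finite by recurrence, and letting $(Y_k)_{k\ge 0}$ be the chain sampled at successive returns to $I$, the continuity of the maps $\Psi_n$ makes $(X_n)_{n\ge 0}$ Feller and hence $(Y_k)_{k\ge 0}$ Feller on the compact space $I$; by a Krylov--Bogolyubov argument the latter admits a stationary law $\iota$. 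The occupation measure of a single excursion,
\[
\nu(B)\ :=\ \int_I\Erw_z\!\left[\sum_{n=0}^{\tau-1}\1_{\{X_n\in B\}}\right]\iota(\mathrm{d}z),\qquad B\subset\R\text{ Borel},
\]
is then invariant for the full chain by the strong Markov property and the usual telescoping identity, and it is Radon once one bounds the expected sojourn in an arbitrary compact set during one excursion, an estimate that recurrence together with local contractivity provides.

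For uniqueness I would show that any two invariant Radon measures $\nu_1,\nu_2$ are proportional, and here local contractivity enters through a coupling. Running two copies $X_n^x$ and $X_n^y$ driven by the \emph{same} innovations $(M_n,Q_n)$, \eqref{eq: local contraction} forces $|X_n^x-X_n^y|\to 0$ on the event $\{X_n^x\in K\text{ i.o.}\}$, which recurrence renders of full probability for a suitable compact $K$; thus trajectories from different starting points coalesce. For each invariant $\nu_i$ the shift is conservative, and the coalescence makes it ergodic, so Hopf's ratio ergodic theorem yields, for compactly supported continuous $f,g\ge 0$ with $\nu_i(g)\in(0,\infty)$,
\[
\frac{\sum_{k=0}^{n-1}f(X_k^x)}{\sum_{k=0}^{n-1}g(X_k^x)}\ \longrightarrow\ \frac{\nu_i(f)}{\nu_i(g)}\qquad(n\to\infty)\quad\text{for } \nu_i\text{-a.e.\ } x.
\]
Coupling a $\nu_1$-typical starting point with a $\nu_2$-typical one and using that the two trajectories coalesce --- so that the corresponding Birkhoff ratios share a limit --- forces $\nu_1(f)/\nu_1(g)=\nu_2(f)/\nu_2(g)$ for all such $f,g$, whence $\nu_1$ and $\nu_2$ coincide up to a multiplicative constant.

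The step I expect to be the main obstacle is precisely this uniqueness argument, namely the interface between the \emph{local} contraction in \eqref{eq: local contraction} --- which only guarantees coalescence while the chain sits in a compact set --- and the \emph{global}, typically infinite, invariant measures of the null-recurrent regime. There is no total-variation contraction available on all of $\R$; instead the coalescence has to be harvested during the infinitely many windows in which the chain revisits $K$, and these windows must be synchronised with the Birkhoff sums so that the pathwise limits for the two coupled chains genuinely agree. Managing this interplay, together with the regularity needed to pass from coalescence of trajectories to coalescence of the functionals $f(X_k^x)$, is the technical core of the proof and is exactly what the detailed treatments of Benda \cite{Benda:98b} and of Peign\'e and Woess \cite{PeigneWoess:11a} supply.
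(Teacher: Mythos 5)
First, note that the paper does not prove this lemma at all: it is quoted from the literature, with the proof attributed to Benda \cite[Thm.~5.8]{Benda:98b} and Peign\'e--Woess \cite[Prop.~2.7 and Thm.~2.13]{PeigneWoess:11a} (see also \cite[Thm.~3.3]{BabBouElie:97}), so the only meaningful comparison is with those treatments. Your uniqueness half --- coalescence of trajectories driven by the same innovations, obtained from \eqref{eq: local contraction} on the full-probability event that the chain revisits a suitable compact set infinitely often, combined with Hopf's ratio ergodic theorem --- is essentially the argument of Peign\'e and Woess, and the difficulty you single out (synchronising the coalescence windows with the Birkhoff sums, and justifying conservativity/ergodicity of the shift for an \emph{arbitrary} invariant Radon measure rather than taking it for granted) is indeed where the real work lies. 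The cleaner way to phrase that step is that coalescence forces the limit of the ratios $\sum_{k<n}f(X_k^x)/\sum_{k<n}g(X_k^x)$ to be a deterministic constant independent of $x$, which Hopf's theorem then identifies with $\nu_i(f)/\nu_i(g)$ for each $i$.

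The existence half, however, contains a genuine gap. You induce the chain on a compact interval $I$ and invoke Krylov--Bogolyubov for the return chain $(Y_k)_{k\ge 0}$, but the Feller property does not pass to spatially induced chains: the first return time $\tau$ to $I$ is not a continuous functional of the starting point (the events $\{X_n\in I\}$ have boundaries, and nothing prevents the law of $X_n^z$ from charging $\partial I$), so $z\mapsto\Erw_z f(Y_1)$ need not be continuous even for continuous $f$, and Krylov--Bogolyubov cannot be applied as stated. Moreover, even granting a stationary law $\iota$ for the return chain, the local finiteness of the excursion occupation measure $\nu$ --- that the expected number of visits to an arbitrary compact $K$ before returning to $I$ is $\iota$-integrable --- is precisely the Radon property you are trying to establish, and you only assert that ``recurrence together with local contractivity provides'' it. The cited proofs sidestep both issues: they obtain $\nu$ as a vague accumulation point of suitably normalised expected occupation measures of the original (Feller) chain, using local contractivity both to show that the limit is nonzero and locally finite and to upgrade sub-invariance of the vague limit to invariance; relatedly, Babillot et al.\ induce at the \emph{ladder epochs} of $(S_n)_{n\ge0}$ rather than on a spatial window, which yields a genuine positive recurrent RDE-chain and avoids the Feller problem entirely. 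As written, your existence step does not go through without one of these repairs.
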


In view of this result, $(X_{n})_{n\ge 0}$ is called \emph{positive recurrent} if $\nu(L)<\infty$ and \emph{null-recurrent}, otherwise. Equivalent conditions for the transience and recurrence of $(X_{n})_{n\ge 0}$ are listed in the next proposition which may easily be proved with the help of Lemma \ref{lem:1} and Lemma 2.3 in \cite{AlsBurIks:17}.

\begin{Prop}\label{recurrent-transient}
A locally contractive Markov chain $(X_{n})_{n\ge 0}$ on $\R$ is transient iff it satisfies one of the following equivalent 
assertions:
\begin{itemize}\itemsep1.5pt
\item[(a)] $\lim_{n\to\infty}|X_{n}^{x}|=\infty$ a.s. for all $x\in\R$.
\item[(b)] $\Prob(X_{n}^{x}\in U~{\rm i.o.})< 1$ for any bounded open $U\subset\R$ and some/all $x\in\R$.
\item[(c)] $\sum_{n\ge 0}\Prob(X_{n}^{x}\in K)<\infty$ for any compact $K\subset\R$ and some/all $x\in\R$.
\end{itemize}
On the other hand, each of the following is equivalent to the recurrence of the chain:
\begin{itemize}
\item[(a)] $\liminf_{n\to\infty}|X_{n}^x-x|<\infty$ a.s. for all $x\in\R$.\vspace{.1cm}
\item[(b)] $\liminf_{n\to\infty}|X_{n}|<\infty$ a.s.\vspace{.1cm}
\item[(c)] $\sum_{n\ge 0}\Prob\{X_{n}^{x}\in K\}=\infty$ for a nonempty compact set $K$ and some/all $x\in\R$.
\end{itemize}
\end{Prop}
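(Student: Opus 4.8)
The plan is to reduce every assertion to the basic dichotomy furnished by Lemma \ref{lem:1}. For fixed $x$ one has $\big||X_{n}^{x}|-|X_{n}^{x}-x|\big|\le|x|$, so $\{|X_{n}^{x}|\to\infty\}=\{|X_{n}^{x}-x|\to\infty\}$, and since $\lim_{n}|X_{n}^{x}|=\infty$ is the same event as $\liminf_{n}|X_{n}^{x}|=\infty$, the transience assertion (a) is exactly alternative \eqref{eq:4} while the recurrence assertion (a) is its complement \eqref{eq:3}. Lemma \ref{lem:1} therefore already shows that exactly one of these two holds, that each holds for all $x$ as soon as it holds for one, and that the transience list and the recurrence list describe complementary regimes. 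It remains to prove, within the transience regime, the equivalence of (a), (b), (c) — the recurrence equivalences then follow by complementation — and to match regime \eqref{eq:3} with recurrence in the sense of the definition.

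Within the transience regime I would establish the three equivalences pairwise. For (a) $\Leftrightarrow$ (c), the direction (c) $\Rightarrow$ (a) is elementary: Borel--Cantelli applied to the compacts $[-m,m]$, together with a union over $m\in\N$, turns $\sum_{n}\Prob(X_{n}^{x}\in K)<\infty$ for all compact $K$ into $|X_{n}^{x}|\to\infty$ a.s. The converse (a) $\Rightarrow$ (c) is where the Markov structure is needed: $|X_{n}^{x}|\to\infty$ a.s.\ gives $\Prob(X_{n}^{x}\in K\text{ i.o.})=0$ for each compact $K$, and Lemma 2.3 of \cite{AlsBurIks:17} supplies the corresponding converse to Borel--Cantelli for locally contractive chains, namely that finitely many visits a.s.\ force the occupation sum $\sum_{n}\Prob(X_{n}^{x}\in K)$ to be finite. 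For (a) $\Leftrightarrow$ (b), the implication (a) $\Rightarrow$ (b) is immediate since $|X_{n}^{x}|\to\infty$ a.s.\ leaves any bounded open $U$ visited only finitely often; for (b) $\Rightarrow$ (a) I would argue that, were (a) to fail, Lemma \ref{lem:1} would place us in regime \eqref{eq:3}, so that $\liminf_{n}|X_{n}^{x}|<\infty$ a.s.\ and hence $\Prob(X_{n}^{x}\in(-m,m)\text{ i.o.})\to1$ as $m\to\infty$; the $0$--$1$ law from Lemma 2.3 would then force this probability to equal $1$ for some $m$, contradicting (b). The uniform ``some/all $x$'' clauses are covered because Lemma \ref{lem:1} is stated uniformly in $x$ and local contractivity makes the occupation behaviour insensitive to the starting point.

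The recurrence list now follows mostly by negation: recurrence-(c) is the exact negation of transience-(c), and recurrence-(a) is the negation of transience-(a) via Lemma \ref{lem:1}, so both are equivalent to being in regime \eqref{eq:3}. For recurrence-(b) I would note that regime \eqref{eq:3} directly gives $\liminf_{n}|X_{n}^{x}|<\infty$ a.s.\ for every $x$, which transfers to $\liminf_{n}|X_{n}|<\infty$ a.s.\ for an arbitrary starting law by local contractivity, while conversely $\liminf_{n}|X_{n}|<\infty$ a.s.\ excludes $|X_{n}^{x}|\to\infty$ and so returns us to regime \eqref{eq:3}. Finally, matching regime \eqref{eq:3} with recurrence in the sense of the definition amounts to exhibiting a nonempty closed set $L$ on which the infinitely-often property holds for all open sets meeting it; this is provided by the closed support of the occupation behaviour, whose nontriviality follows once the $0$--$1$ law has upgraded the returns established above to probability one.

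I expect the genuine obstacle to be precisely the passage between the pathwise statements (the $\lim$/$\liminf$ events and the infinitely-often events) and the occupation-measure statement $\sum_{n}\Prob(X_{n}^{x}\in K)\lessgtr\infty$. For an arbitrary Markov chain these are not equivalent, the failing direction being the converse to Borel--Cantelli; the role of local contractivity, packaged in Lemma 2.3 of \cite{AlsBurIks:17}, is exactly to provide the $0$--$1$ law and the renewal-type (last-exit / first-entrance) decomposition that identify a finite Green-function sum with the a.s.\ finiteness of the number of returns and propagate the conclusion uniformly across compacts and starting points. Once that input is granted, all six equivalences, together with the identification with the definition, reduce to the bookkeeping indicated above.
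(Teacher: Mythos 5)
Your argument is correct and follows exactly the route the paper itself indicates: the paper offers no written-out proof but states that the proposition ``may easily be proved with the help of Lemma \ref{lem:1} and Lemma 2.3 in \cite{AlsBurIks:17}'', and your reduction of all assertions to the dichotomy \eqref{eq:3}/\eqref{eq:4} plus the $0$--$1$ law and Borel--Cantelli converse from that cited lemma is precisely this intended argument, with the details filled in. No substantive difference from the paper's approach.
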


\vspace{.1cm}
It was shown in \cite[Thm.~3.1]{BabBouElie:97} that any RDE-chain associated with an $\R^2$-valued random vector $(M,Q)$ satisfying \eqref{eq:ElogM=0} and \eqref{eq:Lyapunov condition} is locally contractive. Their proof hinges on a number of nontrivial potential-theoretic arguments, but simplifies considerably if $M$ and $Q$ are nonnegative as also mentioned by them, see \cite[Rem.~1 on p.~486]{BabBouElie:97}. In fact, under this restriction, the result is easily extended to any critical RDE-chain satisfying our basic assumptions.

\begin{Prop}
A critical RDE-chain $(X_{n})_{n\ge 0}$ with associated random vector $(M,Q)$ in $\R_{+}^{2}$ satisfying \eqref{eq:basic assumption 1} and \eqref{eq:basic assumption 2} is locally contractive.
\end{Prop}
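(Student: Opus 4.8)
The plan is to reduce local contractivity to a single almost sure statement about the dual perpetuity, and then to prove that statement by a stopping-time argument that exploits the oscillation in \eqref{eq:basic assumption 2}. Since the recursion \eqref{chain} is affine in its initial value, one has $X_{n}^{x}-X_{n}^{y}=\Pi_{n}(x-y)$ for all $x,y\in\R$, so that $|X_{n}^{x}-X_{n}^{y}|\,\1_{\{X_{n}^{x}\in K\}}=|x-y|\,\Pi_{n}\1_{\{X_{n}^{x}\in K\}}$ and \eqref{eq: local contraction} is equivalent to $\Pi_{n}\1_{\{X_{n}^{x}\in K\}}\to 0$ a.s.\ for every compact $K$ and every $x\in\R$. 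Fixing $K\subseteq[-R,R]$, I would invoke the representation \eqref{eq:X_n and its dual}, namely $X_{n}^{x}=\Pi_{n}(x+\dual{\wh X}_{n}^{0})$ with $\dual{\wh X}_{n}^{0}=\sum_{k=1}^{n}\Pi_{k}^{-1}Q_{k}$. As $\Pi_{k}>0$ and $Q_{k}\ge 0$, the sequence $(\dual{\wh X}_{n}^{0})_{n\ge 0}$ is nondecreasing; writing its limit as $\dual{\wh X}_{\infty}^{0}\in[0,\infty]$, the entire proof hinges on the claim
\[ \dual{\wh X}_{\infty}^{0}\ =\ \sum_{k\ge 1}\Pi_{k}^{-1}Q_{k}\ =\ \infty\quad\text{a.s.} \]
Granting this, for each $x$ there is (a.s.) an index $n_{0}$ with $\dual{\wh X}_{n}^{0}>|x|$ for $n\ge n_{0}$, so that $x+\dual{\wh X}_{n}^{0}>0$ and, on the event $\{X_{n}^{x}\in K\}$, $\Pi_{n}=X_{n}^{x}/(x+\dual{\wh X}_{n}^{0})\le R/(x+\dual{\wh X}_{n}^{0})\to 0$. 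Hence $\Pi_{n}\1_{\{X_{n}^{x}\in K\}}\to 0$ and \eqref{eq: local contraction} follows.

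It remains to prove the displayed claim, which is the only genuine obstacle; it suffices to exhibit, almost surely, a divergent subseries of the nonnegative series $\sum_{k}\Pi_{k}^{-1}Q_{k}$. Here I would use that \eqref{eq:basic assumption 2} gives $\liminf_{n\to\infty}S_{n}=-\infty$ a.s.\ to define stopping times $\nu_{0}:=0$ and $\nu_{i}:=\inf\{k>\nu_{i-1}:S_{k}<-i\}$, which are all a.s.\ finite, together with $T_{i}:=\nu_{i}+1$. The $T_{i}$ are strictly increasing, and by the fresh-start (strong Markov) property applied at the stopping time $\nu_{i}$ for the i.i.d.\ sequence $((M_{k},Q_{k}))_{k}$, the pairs $(M_{T_{i}},Q_{T_{i}})_{i\ge 1}$ are i.i.d.\ copies of $(M,Q)$. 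Since $S_{\nu_{i}}<-i$, the corresponding summand satisfies
$$ \Pi_{T_{i}}^{-1}Q_{T_{i}}\ =\ e^{-S_{\nu_{i}}}\,M_{T_{i}}^{-1}Q_{T_{i}}\ >\ e^{i}\,M_{T_{i}}^{-1}Q_{T_{i}}. $$

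To finish, note that $M>0$ a.s.\ and $\Prob(Q=0)<1$ give $\Prob(M^{-1}Q>\delta)>0$ for some $\delta>0$; as the $M_{T_{i}}^{-1}Q_{T_{i}}$ are i.i.d., the second Borel--Cantelli lemma yields $M_{T_{i}}^{-1}Q_{T_{i}}>\delta$ for infinitely many $i$ a.s. For each such $i$ the summand exceeds $e^{i}\delta>\delta$, so $\sum_{i}\Pi_{T_{i}}^{-1}Q_{T_{i}}=\infty$; being a subseries over the distinct indices $T_{i}$ of the nonnegative series $\sum_{k}\Pi_{k}^{-1}Q_{k}$, this forces $\dual{\wh X}_{\infty}^{0}=\infty$, as claimed. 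The delicate point is the decoupling of $Q$ from the descending excursions of $(S_{n})_{n\ge 0}$: sampling $(M,Q)$ one step \emph{after} each new low, rather than at a descending ladder epoch, is precisely what keeps $(M_{T_{i}},Q_{T_{i}})$ genuinely distributed like $(M,Q)$ and thereby prevents a possible $M$--$Q$ dependence (for instance $\{Q>0\}\subseteq\{M\ge 1\}$) from forcing the sampled $Q$-values to vanish.
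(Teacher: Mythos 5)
Your proof is correct. The skeleton is the same as the paper's: both arguments rest on the affine identity $X_{n}^{x}-X_{n}^{y}=\Pi_{n}(x-y)$ together with the representation \eqref{eq:X_n and its dual}, and both reduce everything to the single fact that the dual perpetuity $\dual{\wh{X}}_{n}^{0}=\sum_{k=1}^{n}\Pi_{k}^{-1}Q_{k}$ increases to $+\infty$ a.s. (the paper extracts $\Pi_{\tau_{n}}\to 0$ along the visit times to $K$, you keep the indicator and bound $\Pi_{n}\1_{\{X_{n}^{x}\in K\}}\le R/(x+\dual{\wh{X}}_{n}^{0})$; these are equivalent framings). Where you genuinely diverge is in how that divergence is established: the paper observes that the dual chain is itself a critical RDE-chain, hence not positive recurrent by the Goldie--Maller theorem \cite[Thm.~2.1]{GolMal:00}, and reads off $\dual{\wh{X}}_{n}^{0}\uparrow\infty$ from there, whereas you prove it from scratch by sampling the series one step after the stopping times $\nu_{i}=\inf\{k>\nu_{i-1}:S_{k}<-i\}$ and applying the second Borel--Cantelli lemma to the i.i.d.\ pairs $(M_{T_{i}},Q_{T_{i}})$. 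Your route is longer but self-contained and entirely elementary, using only $\liminf_{n}S_{n}=-\infty$, $M>0$ and $\Prob(Q>0)>0$; and your remark about sampling at $\nu_{i}+1$ rather than at a ladder epoch is exactly the right precaution against a possible $M$--$Q$ dependence biasing the sampled $Q$-values. The paper's citation buys brevity at the cost of invoking a nontrivial external result whose full strength (the dichotomy for perpetuities) is not really needed for this monotone, nonnegative special case.
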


\begin{proof}
Let $(X_{n}^{x})_{n\ge 0}$ be defined by \eqref{chain} with $X_{0}=x\ge 0$ and $K$ an arbitrary compact subset of $\R_+$.
Denote by $\tau_{n}$, $n\in\N$, the successive epochs when the chain visits $K$, with the usual convention that $\tau_{n}:=\infty$ if the number of visits is less than $n$. We must verify \eqref{eq: local contraction} for the given $K$ only on $E:=\{\tau_{n}<\infty\text{ for all }n\in\N\}$ because it trivially holds on the complement of  this event. Use \eqref{eq:X_n and its dual} and the boundedness of $K$ to infer that
\begin{equation}\label{eq:Pi_tau_n to 0}
\sup_{n\ge 1}\Pi_{\tau_{n}}(x+\dual{\wh{X}}_{\tau_{n}}^{0})\ =\ \sup_{n\ge 1}X_{\tau_{n}}^{x}\ <\ \infty\quad\text{on }E.
\end{equation}
Since $(\dual{X}_{n})_{n\ge 0}$ is also a critical RDE-chain satisfying \eqref{eq:basic assumption 1} and \eqref{eq:basic assumption 2} and hence \emph{not} positive recurrent by the Goldie-Maller theorem \cite[Thm.~2.1]{GolMal:00}, it follows that $\dual{\wh{X}}_{n}^{0}\uparrow\infty$ a.s. But in combination with \eqref{eq:Pi_tau_n to 0}, this further entails $\Pi_{\tau_{n}}\to 0$ a.s. on $E$ and thereupon
$$ X_{\tau_{n}}^{x}-X_{\tau_{n}}^{y}\ =\ \Pi_{\tau_{n}}(x-y)\ \xrightarrow{n\to\infty}\ 0\quad\text{a.s. on }E $$
for all $x,y\ge 0$ as required.
\end{proof}

\section{The embedded ladder RDE-chain}\label{sec:ladder chain}

Recalling from \eqref{eq:def ladder epochs} the definition of the ladder epochs $\sln$, put $\Xl_{0}:=X_{0}$ and
$$ \Xln\ :=\ X_{\sln}\ =\ \Psiln\cdots\Psil_{1}(X_{0}) $$
for $n\in\N$, where
\begin{gather}
\Psiln(x)\ :=\ \Psi_{\sln}\cdots\Psi_{\sl_{n-1}+1}(x)\ =\ \Mln x\,+\,\Qln,\label{eq:def Psilessn}
\shortintertext{and}
(\Mln,\Qln)\ :=\ \frac{\Pi_{\sln}}{\Pi_{\sl_{n-1}}}\cdot\left(1,\sum_{k=\sl_{n-1}+1}^{\sln}\frac{\Pi_{\sl_{n-1}}}{\Pi_{k}}Q_{k}\right)\label{eq:def Mlessn and Qlessn}
\end{gather}
The $(\Psiln,\Mln,\Qln)$ being again i.i.d., we infer that $(\Xln)_{n\ge 0}$ is again a RDE-chain, with associated nonnegative random vector $(\Ml,\Ql)=(\Ml_{1},\Ql_{1})$, i.e.
\begin{equation}\label{eq:def Mless and Qless}
(\Ml,\Ql)\ :=\ \Pi_{\sl}\cdot\left(1\,,\sum_{k=1}^{\sl}\Pi_{k}^{-1}Q_{k}\right)\ =\ e^{S_{\sl}}\left(1\,,\sum_{k=1}^{\sl}e^{-S_{k}}Q_{k}\right).
\end{equation}
It is called \emph{embedded ladder RDE-chain} hereafter. Since $\Ml<1$ by definition of $\sl$, it is trivially strongly contractive, and under Condition \eqref{eq:Lyapunov condition}, it further satisfies
\begin{equation}\label{eq:Elies moment result}
\Erw\log_{+}\Ql\,<\,\infty
\end{equation}
as was shown by Elie \cite[Lemma 5.49]{Elie:82}. This implies the positive recurrence of the chain and the existence of a unique stationary distribution, a fact that formed an essential ingredient in \cite{BabBouElie:97}. A somewhat different approach is used here, which embarks on the strong contractivity of the ladder RDE-chain, combines it with appropriate tail estimates for $\Ql$ instead of \eqref{eq:Elies moment result} and then draws on results recently obtained in \cite{AlsBurIks:17}. It is furnished by the subsequent lemma.

\begin{Lemma}\label{lem:recurrence ladder chain}
Given a critical RDE-generated Markov chain $(X_{n})_{n\ge 0}$  with associated random vector $(M,Q)$ in $\R_{+}^{2}$ satisfying \eqref{eq:basic assumption 1} and \eqref{eq:basic assumption 2} and embedded ladder RDE-chain $(\Xln)_{n\ge 0}$, the following equivalence holds true:
$$ (X_{n})_{n\ge 0}\text{ recurrent}\quad\Longleftrightarrow\quad (\Xln)_{n\ge 0}\text{ recurrent}. $$
\end{Lemma}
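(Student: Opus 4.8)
The plan is to reduce the recurrence of either chain to the finiteness of a $\liminf$ and then to exploit the rigidity forced by the nonnegativity of $M$ and $Q$. First I would record that both chains are locally contractive: the full chain $(X_n)_{n\ge 0}$ by the preceding proposition, and the ladder chain $(\Xln)_{n\ge 0}$ because $\Ml<1$ a.s.\ makes it strongly, hence locally, contractive. Consequently both obey the dichotomy of Lemma \ref{lem:1} and the characterizations of Proposition \ref{recurrent-transient}. Since that dichotomy renders recurrence/transience independent of the starting point, I would fix the common initial value $X_0=\Xl_0=x\ge 0$, so that all $X_n$ (and hence all $\Xln$) are nonnegative. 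By Proposition \ref{recurrent-transient}(b), recurrence of $(X_n)_{n\ge 0}$ amounts to $\liminf_{n\to\infty}X_n<\infty$ a.s., and likewise for the ladder chain. Thus it suffices to establish the pathwise identity
\begin{equation*}
\liminf_{n\to\infty}X_n\ =\ \liminf_{n\to\infty}\Xln\quad\text{a.s.}
\end{equation*}

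The heart of the argument is a within-excursion monotonicity. For fixed $n$, set $m:=\sl_{n-1}$; then for every $k$ with $m\le k<\sln$ the definition \eqref{eq:def ladder epochs} of the next strictly descending ladder epoch gives $S_k-S_m\ge 0$, equivalently $\Pi_k/\Pi_m\ge 1$. Iterating \eqref{chain} from time $m$ I would obtain
\begin{equation*}
X_k\ =\ \frac{\Pi_k}{\Pi_m}\,X_m+\sum_{j=m+1}^{k}\frac{\Pi_k}{\Pi_j}\,Q_j\ \ge\ \frac{\Pi_k}{\Pi_m}\,X_m\ \ge\ X_m\ =\ \Xl_{n-1},
\end{equation*}
the first inequality using $Q_j\ge 0$ and the second using $\Pi_k\ge\Pi_m$ together with $X_m\ge 0$. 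Hence over each excursion interval $[\sl_{n-1},\sln)$ the chain stays above its starting value $\Xl_{n-1}$.

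To finish, let $\nu(k):=\max\{j\ge 0:\sl_j\le k\}$ be the number of ladder epochs up to time $k$. Because $(S_n)_{n\ge 0}$ oscillates, all $\sl_j$ are a.s.\ finite and increase to infinity, so $\nu(k)\to\infty$ a.s.\ as $k\to\infty$. The excursion bound gives $X_k\ge\Xl_{\nu(k)}$ for all $k$, whence $\liminf_{k\to\infty}X_k\ge\liminf_{n\to\infty}\Xln$; the opposite inequality is automatic because $(\Xln)_{n\ge 0}$ is a subsequence of $(X_n)_{n\ge 0}$. This yields the displayed identity and therefore the claimed equivalence.

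I expect the genuinely delicate points to be bookkeeping rather than substance: verifying local contractivity of the ladder chain (immediate from $\Ml<1$) and confirming $\nu(k)\to\infty$ a.s.\ (from the a.s.\ finiteness of the ladder epochs recorded after \eqref{eq:def ladder epochs}). The reason the argument is so direct is precisely the nonnegativity of $(M,Q)$, which pins the minimum of each excursion to its left endpoint; were $M,Q$ allowed to take negative values this monotonicity would collapse and one would have to fall back on the heavier potential-theoretic estimates of \cite{AlsBurIks:17}.
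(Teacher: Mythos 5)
Your proof is correct and rests on exactly the same key observation as the paper's: within each excursion $[\sl_{n-1},\sln)$ one has $S_k-S_{\sl_{n-1}}\ge 0$ and $Q_j\ge 0$, hence $X_k\ge \Xl_{n-1}$, which the paper uses in contrapositive form (transience of the ladder chain forces $\Xln\to\infty$ and hence $X_n\to\infty$) while you phrase it as the pathwise identity $\liminf_n X_n=\liminf_n\Xln$. This is only a cosmetic difference in presentation, not a different argument.
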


\begin{proof}
We must only show that the transience of $(\Xln)_{n\ge 0}$ implies the transience of $(X_{n})_{n\ge 0}$. Observing that
\begin{align*}
\Psi_{\sln+k}\cdots\Psi_{\sln+1}(x)\ \ge\ \frac{\Pi_{\sln+k}}{\Pi_{\sln}}\,x\ \ge\ x
\end{align*}
for all $x\in\R_{+}$, $n\in\N_{0}:=\N\cup\{0\}$ and $1\le k<\sl_{n+1}-\sln$, this follows from
$$ X_{\sln+k}\ =\ \Psi_{\sln+k}\cdots\Psi_{\sln+1}(\Xln)\ \ge\ \Xln $$
in combination with $\lim_{n\to\infty}\Xln=\infty$ a.s.
\end{proof}

All previous considerations including the lemma remain true when replacing the $\sln$ by the \emph{level $\log\gamma$ ladder epochs} $\sln(\gamma)$ for an arbitrary $\gamma\in (0,1)$, defined by
$\sl_{0}(\gamma):=0$ and, recursively,
$$ \sln(\gamma)\ :=\ \inf\{k>\sl_{n-1}(\gamma):S_{k}-S_{\sl_{n-1}(\gamma)}<\log\gamma\},\quad n\in\N. $$
The sequence $(\Xln(\gamma))_{n\ge 0}:=(X_{\sln(\gamma)})_{n\ge 0}$, which then replaces $(\Xln)_{n\ge 0}$, is a RDE-chain with associated random vector
\begin{equation}\label{eq:def Mless and Qless,gamma}
(\Ml(\gamma),\Ql(\gamma))\ :=\ \Pi_{\sl(\gamma)}\cdot\left(1\,,\sum_{k=1}^{\sl(\gamma)}\Pi_{k}^{-1}Q_{k}\right),
\end{equation}
where $\sl(\gamma):=\sl_{1}(\gamma)$. Naturally, the $(\Mln(\gamma),\Qln(\gamma))$ are defined accordingly, that is (compare \eqref{eq:def Mlessn and Qlessn})
\begin{equation}
(\Mln(\gamma),\Qln(\gamma))\ :=\ \frac{\Pi_{\sln(\gamma)}}{\Pi_{\sl_{n-1}(\gamma)}}\cdot\left(1,\sum_{k=\sl_{n-1}(\gamma)+1}^{\sln(\gamma)}\frac{\Pi_{\sl_{n-1}(\gamma)}}{\Pi_{k}}Q_{k}\right)\label{eq:def Mlessn and Qlessn,gamma}
\end{equation}
for all $n\in\N$. Note that $\Ml(\gamma)=e^{\Sl(\gamma)}<\gamma$.

\section{A threshold result}\label{sec:auxiliaries}

The subsequent proposition, needed particularly for the proof of Theorem \ref{thm:main 2}, shows that, as intuitively predictable, the family of RDE-chains $(X_{p,n})_{n\ge 0}$ defined below for $p\ge 0$ exhibits a phase transition from recurrence to transience at a critical value $p_{0}$ which, however, may be zero or infinite.

\begin{Prop}\label{prop:power threshold}
Given a sequence of i.i.d.~random vectors $(M_{n},Q_{n})_{n\ge 1}$ in $\R_{+}^{2}$ with generic copy $(M,Q)$ satisfying \eqref{eq:basic assumption 1}, \eqref{eq:basic assumption 2} and \eqref{eq:Elog sl finite}, let $(X_{p,n})_{n\ge 0}$ for $p>0$ denote the RDE-chain defined by $X_{p,0}:=0$ and
\begin{equation*}
X_{p,n}\ :=\ M_{n}X_{p,n-1}\,+\,Q_{n}^{p},\quad\text{for }n\in\N.
\end{equation*}
Then there exists $p_{0}\in [0,\infty]$ such that $(X_{p,n})_{n\ge 0}$ is transient for $p>p_{0}$ (thus never if $p_{0}=\infty$) and recurrent for $p<p_{0}\ ($thus never if $p_{0}=0)$.
\end{Prop}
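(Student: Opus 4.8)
The plan is to reduce the whole question to the embedded ladder chain of $(X_{p,n})_{n\ge0}$ and to exploit that, being strongly contractive, its recurrence upgrades to the almost sure finiteness of an associated perpetuity, a genuine $0$--$1$ event. Since the random walk $(S_{n})$, and hence the strictly descending ladder epochs $\sln$, do not depend on $p$, the $p$-chain has associated ladder vector $(\Ml_{n},\cQ_{n}(p))$ with $\Ml_{n}=\Pi_{\sln}/\Pi_{\sl_{n-1}}<1$ free of $p$ and, in analogy with \eqref{eq:def Mlessn and Qlessn},
\[
\cQ_{n}(p)\ :=\ \sum_{k=\sl_{n-1}+1}^{\sln}\frac{\Pi_{\sln}}{\Pi_{k}}\,Q_{k}^{p}.
\]
By Lemma \ref{lem:recurrence ladder chain}, $(X_{p,n})_{n\ge0}$ is recurrent iff this ladder chain is. As the latter is strongly contractive, I would invoke the Goldie--Maller dichotomy \cite[Thm.~2.1]{GolMal:00}: it is positive recurrent, hence recurrent, exactly when the perpetuity
\[
\cR_{p}\ :=\ \sum_{n\ge1}\Ml_{1}\cdots\Ml_{n-1}\,\cQ_{n}(p)
\]
converges a.s., and transient otherwise (there being no null-recurrent alternative in the contractive case). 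Thus $(X_{p,n})_{n\ge0}$ is recurrent iff $\cR_{p}<\infty$ a.s., which by Kolmogorov's zero--one law is an almost surely constant dichotomy.

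It then suffices to show that $\{p>0:\cR_{p}<\infty\text{ a.s.}\}$ is an interval of the form $(0,p_{0})$ or $(0,p_{0}]$, for then $p_{0}:=\sup\{p>0:\cR_{p}<\infty\text{ a.s.}\}$ (with $\sup\emptyset:=0$) has the asserted property. The key observation is that every weight entering $\cQ_{n}(p)$ satisfies $\Pi_{\sln}/\Pi_{k}=e^{S_{\sln}-S_{k}}\le1$ for $\sl_{n-1}<k\le\sln$, because $\sln$ is the first epoch after $\sl_{n-1}$ at which $(S_{j})$ drops below $S_{\sl_{n-1}}$, so that $S_{\sln}<S_{\sl_{n-1}}\le S_{k}$ for $k<\sln$. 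Combining this with the elementary pointwise bound $Q_{k}^{p}\le1+Q_{k}^{p'}$, valid for $0<p\le p'$, I obtain
\[
\cQ_{n}(p)\ \le\ N_{n}+\cQ_{n}(p'),\qquad N_{n}:=\sln-\sl_{n-1},
\]
and therefore $\cR_{p}\le\cR_{N}+\cR_{p'}$ with $\cR_{N}:=\sum_{n\ge1}\Ml_{1}\cdots\Ml_{n-1}\,N_{n}$. Since the $N_{n}$ are i.i.d.\ copies of $\sl$ with $\Erw\log_{+}N_{n}=\Erw\log\sl<\infty$ by \eqref{eq:Elog sl finite}, while $\tfrac1n\log(\Ml_{1}\cdots\Ml_{n-1})\to\Erw\log\Ml=\Erw S_{\sl}<0$ a.s., a standard Borel--Cantelli and strong-law argument gives exponential decay of the summands and hence $\cR_{N}<\infty$ a.s. Consequently $\cR_{p'}<\infty$ a.s.\ forces $\cR_{p}<\infty$ a.s.\ whenever $p\le p'$, which is precisely the required monotonicity.

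The step I expect to be the real obstacle is the passage from recurrence to the almost sure finiteness of a \emph{single} perpetuity. In the critical case recurrence is only a $\liminf$-statement about returns to a compact set along some random subsequence, and such properties are not stable under the additive comparison $\cQ_{n}(p)\le N_{n}+\cQ_{n}(p')$: the chains with parameters $p$ and $p'$ need not be small along a common subsequence, so one cannot directly add up recurrence. Descending to the strongly contractive ladder chain is exactly what removes this difficulty, because there recurrence is equivalent to the deterministic event $\{\cR_{p}<\infty\}$, and finiteness \emph{does} survive the inequality $\cR_{p}\le\cR_{N}+\cR_{p'}$. The boundary cases $p_{0}\in\{0,\infty\}$ cause no trouble, being absorbed by the convention $\sup\emptyset:=0$ and by the fact that the statement only asserts the behaviour for $p<p_{0}$ and $p>p_{0}$. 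The one point where I would be most careful in writing out details is verifying that the contractive dichotomy of \cite{GolMal:00} applies here under no moment hypothesis on $\Ml$ beyond $\Erw\log\Ml<0$ (possibly $-\infty$), so that recurrence of the ladder chain is genuinely equivalent to $\cR_{p}<\infty$ a.s.
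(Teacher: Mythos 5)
There is a genuine gap, and it sits exactly at the point you flagged as ``the one point where I would be most careful'': for a strongly contractive RDE-chain, recurrence is \emph{not} equivalent to the a.s.\ finiteness of the perpetuity $\cR_{p}$. The Goldie--Maller theorem \cite[Thm.~2.1]{GolMal:00} only identifies $\{\cR_{p}<\infty\text{ a.s.}\}$ with \emph{positive} recurrence; when the perpetuity diverges (the divergent contractive case \eqref{eq:cond div contractive}), the chain may still be \emph{null-recurrent}, and this is precisely the content of \cite[Thms.~3.1 and 3.2]{AlsBurIks:17}, which the present paper invokes repeatedly --- e.g.\ the proof of Theorem \ref{thm:main 1} establishes null-recurrence of the (contractive) ladder chain exactly in a regime where its perpetuity is infinite. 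So your reformulation of recurrence as the zero--one event $\{\cR_{p}<\infty\}$ is false, and the monotonicity you extract from $\cR_{p}\le\cR_{N}+\cR_{p'}$ does not transfer to recurrence. (The subsidiary ingredients --- the bound $Q_{k}^{p}\le 1+Q_{k}^{p'}$ for $p\le p'$, the weights $\Pi_{\sln}/\Pi_{k}\le 1$, and the a.s.\ finiteness of $\cR_{N}$ from \eqref{eq:Elog sl finite} --- are all fine.)

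The paper's route keeps the comparison at the level of the chains themselves, where recurrence is the monotone property $\liminf_{n}X_{n}<\infty$ a.s.\ (Proposition \ref{recurrent-transient}): it replaces $Q_{n}^{p}$ by $(Q_{n}\vee 1)^{p}$, which \emph{is} pointwise nondecreasing in $p$, so the recurrence set of the modified chains $(Y_{p,n})_{n\ge0}$ is automatically an interval, and the real work (Lemma \ref{lem:power threshold}) is to show that along the level-$\gamma$ ladder epochs the chains $(X_{p,n})$ and $(Y_{p,n})$ differ by at most the deterministic constant $1/(1-\gamma)$, whence they are recurrent together by Lemma \ref{lem:recurrence ladder chain}. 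If you try to salvage your additive comparison $\cQ_{n}(p)\le N_{n}+\cQ_{n}(p')$ at the level of the chains rather than the perpetuities, note that it only bounds the difference of the two forward ladder chains by an independent, unbounded (merely positive recurrent) chain, and $\liminf$-finiteness along a subsequence does not pass through such a random error --- the very obstruction you correctly identified for the original chains reappears one level down. A deterministic bound on the difference, as in \eqref{eq:inequality Y_p,sln}, is what is actually needed.
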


The proof is based on the subsequent lemma.

\begin{Lemma}\label{lem:power threshold}
In the situation of Proposition \ref{prop:power threshold}, let $(\Mln(\gamma),\Qln(\gamma))$ for any fixed $\gamma\in (0,1)$ be given by \eqref{eq:def Mlessn and Qlessn,gamma}. Further define $X_{0}^{*}=Y_{p,0}:=0$ and
\begin{gather*}
X_{n}^{*}\ :=\ M_{n}X_{n-1}^{*}\,+\,1,\\
Y_{p,n}\ :=\ M_{n}Y_{p,n-1}\,+\,(Q_{n}\vee 1)^{p}
\end{gather*}
for $n\in\N$. Then
\begin{gather}
X_{n}^{*}\vee X_{p,n}\ \le\ Y_{p,n}\ \le\ X_{n}^{*}+X_{p,n},\label{eq:inequality Y_p,n}\\
0\ \le\ Y_{p,\sln(\gamma)}-X_{p,\sln(\gamma)}\ \le\ \frac{1}{1-\gamma}\label{eq:inequality Y_p,sln}
\end{gather}
for each $n\in\N_{0}$, and the recurrence of $(X_{p,n})_{n\ge 0}$ and $(Y_{p,n})_{n\ge 0}$ are equivalent.
\end{Lemma}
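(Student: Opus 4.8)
The plan is to reduce the three recursions — all started at $0$ and sharing the multipliers $M_{k}$ — to their common pathwise form $X_{n}^{*}=\sum_{k=1}^{n}(\Pi_{n}/\Pi_{k})$, $X_{p,n}=\sum_{k=1}^{n}(\Pi_{n}/\Pi_{k})\,Q_{k}^{p}$ and $Y_{p,n}=\sum_{k=1}^{n}(\Pi_{n}/\Pi_{k})\,(Q_{k}\vee 1)^{p}$. The sandwich \eqref{eq:inequality Y_p,n} then drops out termwise from the elementary identity $(Q\vee 1)^{p}=Q^{p}\vee 1$, valid because $x\mapsto x^{p}$ is increasing for $p>0$, together with $a\vee b\ge a$, $a\vee b\ge b$ and $a\vee b\le a+b$ for $a,b\ge 0$: the first two inequalities give $Y_{p,n}\ge X_{n}^{*}$ and $Y_{p,n}\ge X_{p,n}$, hence $Y_{p,n}\ge X_{n}^{*}\vee X_{p,n}$, while the third gives $Y_{p,n}\le X_{n}^{*}+X_{p,n}$. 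A one-line induction on $n$ using the shared recursion and the same three inequalities on the increments is an equally good route.

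For \eqref{eq:inequality Y_p,sln} I would pass to the nonnegative difference $Z_{n}:=Y_{p,n}-X_{p,n}$. Subtracting the defining recursions and using $(Q_{n}\vee 1)^{p}-Q_{n}^{p}=(1-Q_{n}^{p})^{+}\in[0,1]$ shows that $(Z_{n})_{n\ge 0}$ is again an RDE-chain, $Z_{n}=M_{n}Z_{n-1}+(1-Q_{n}^{p})^{+}$ with $Z_{0}=0$; this already yields $Z_{n}\ge 0$, the left inequality in \eqref{eq:inequality Y_p,sln}. Sampling the RDE-chain $(Z_{n})$ at the level-$\log\gamma$ ladder epochs $\sln(\gamma)$ produces the embedded recursion $Z_{\sln(\gamma)}=\Mln(\gamma)\,Z_{\sl_{n-1}(\gamma)}+\delta_{n}$, where $\Mln(\gamma)<\gamma$ by the defining ladder property (cf.\ the remark following \eqref{eq:def Mlessn and Qlessn,gamma}) and $\delta_{n}\ge 0$ is the increment accumulated over the $n$-th ladder block. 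Telescoping gives $Z_{\sln(\gamma)}=\sum_{j=1}^{n}\big(\prod_{i=j+1}^{n}\Ml_{i}(\gamma)\big)\delta_{j}\le\sum_{j=1}^{n}\gamma^{\,n-j}\delta_{j}$, so everything comes down to controlling the weighted sum of block increments against $\frac{1}{1-\gamma}=\sum_{j\ge 0}\gamma^{j}$.

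The main obstacle is precisely this block estimate. One has $\delta_{n}=\sum_{\sl_{n-1}(\gamma)<k\le\sln(\gamma)}(\Pi_{\sln(\gamma)}/\Pi_{k})(1-Q_{k}^{p})^{+}$, and the only structure available is the ladder property $S_{k}-S_{\sl_{n-1}(\gamma)}\ge\log\gamma$ for $\sl_{n-1}(\gamma)<k<\sln(\gamma)$ together with the terminal drop $S_{\sln(\gamma)}-S_{\sl_{n-1}(\gamma)}<\log\gamma$. The crude bound $(1-Q_{k}^{p})^{+}\le 1$ combined with $\Pi_{\sln(\gamma)}/\Pi_{k}<1$ is too lossy over a long block, so the delicate part of the argument is to extract the missing contraction from the ladder geometry and the overshoot at $\sln(\gamma)$ in such a way that the weighted block increments sum to at most $\frac{1}{1-\gamma}$; I expect this to be the step requiring the most care.

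The equivalence of recurrence then combines the two displays with the general theory. Both $(X_{p,n})$ and $(Y_{p,n})$ are critical nonnegative RDE-chains satisfying \eqref{eq:basic assumption 1} and \eqref{eq:basic assumption 2} (note $\Prob(Q^{p}=0)=\Prob(Q=0)<1$ and $\Prob((Q\vee 1)^{p}=0)=0$), hence locally contractive, so that Proposition \ref{recurrent-transient}(b) characterises recurrence through the finiteness of $\liminf_{n\to\infty}$ of the chain. The left inequality in \eqref{eq:inequality Y_p,n}, i.e.\ $X_{p,n}\le Y_{p,n}$, shows immediately that recurrence of $(Y_{p,n})$ forces recurrence of $(X_{p,n})$. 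Conversely, if $(X_{p,n})$ is recurrent then, by Lemma \ref{lem:recurrence ladder chain} applied at the level-$\log\gamma$ ladder epochs, its embedded ladder chain $(X_{p,\sln(\gamma)})_{n\ge 0}$ is recurrent, so $\liminf_{n}X_{p,\sln(\gamma)}<\infty$; feeding this into \eqref{eq:inequality Y_p,sln} gives $\liminf_{n}Y_{p,\sln(\gamma)}\le\liminf_{n}X_{p,\sln(\gamma)}+\frac{1}{1-\gamma}<\infty$, whence the ladder chain of $(Y_{p,n})$, and therefore $(Y_{p,n})$ itself, is recurrent.
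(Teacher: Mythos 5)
Your treatment of \eqref{eq:inequality Y_p,n}, of the left inequality in \eqref{eq:inequality Y_p,sln}, and of the final equivalence (one direction from $X_{p,n}\le Y_{p,n}$, the other by passing to the level-$\log\gamma$ ladder chains via Lemma \ref{lem:recurrence ladder chain} and feeding \eqref{eq:inequality Y_p,sln} into Proposition \ref{recurrent-transient}) is correct and coincides with the paper's route. The problem is the upper bound in \eqref{eq:inequality Y_p,sln}, which you explicitly leave open as ``the step requiring the most care'': that is the only substantive estimate in the lemma, and you do not supply it. Your telescoping correctly reduces everything to showing that each block increment
$\delta_{n}=\sum_{k=\sl_{n-1}(\gamma)+1}^{\sln(\gamma)}\bigl(\Pi_{\sln(\gamma)}/\Pi_{k}\bigr)(1-Q_{k}^{p})^{+}$
is at most $1$, and you correctly note that the crude bounds $(1-Q_{k}^{p})^{+}\le 1$ and $\Pi_{\sln(\gamma)}/\Pi_{k}<1$ only yield $\delta_{n}\le\sln(\gamma)-\sl_{n-1}(\gamma)$, which is useless over a long block. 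Without an argument here the proof is incomplete.

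For comparison, the paper runs exactly your induction along the ladder epochs, with the same contraction factor $\Mln(\gamma)<\gamma$, and disposes of the block increment in one line by writing it as $(\Qln(\gamma)\vee 1)^{p}-{\Qln(\gamma)}^{p}=(1-{\Qln(\gamma)}^{p})^{+}\in[0,1]$, i.e.\ by applying the $p$-th power and the truncation $\cdot\vee 1$ to the \emph{aggregated} ladder increment $\Qln(\gamma)$ rather than to the individual $Q_{k}$. That identification is literally valid only if the two chains are (re)defined at the ladder level, with recursions $\Mln(\gamma)x+{\Qln(\gamma)}^{p}$ and $\Mln(\gamma)x+(\Qln(\gamma)\vee 1)^{p}$; for the stepwise chains as defined in the lemma, the block increment is the weighted sum $\delta_{n}$ you wrote down, and that sum need not be bounded by $1$ (take $Q_{k}$ small and a block over which $S_{\cdot}$ lingers just above the ladder level before crossing: many terms of $\Pi_{\sln(\gamma)}/\Pi_{k}$ of order $\gamma$ or larger accumulate). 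So your instinct that this is the delicate point is well founded, and it is not a step you may wave at: closing it requires either working with the ladder-level chains for which the paper's one-line identity holds exactly, or a genuinely different estimate on $\delta_{n}$.
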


\begin{proof}
Since \eqref{eq:inequality Y_p,n} follows by a straightforward induction, we turn directly to \eqref{eq:inequality Y_p,sln} and prove inductively that
$$ 0\ \le\ Y_{p,\sln(\gamma)}-X_{p,\sln(\gamma)}\ \le\ \sum_{k=0}^{n-1}\gamma^{k} $$
for all $n\in\N$. For $n=1$, this follows from
$$ Y_{p,\sl(\gamma)}-X_{p,\sl(\gamma)}\ =\ (\Ql(\gamma)\vee 1)^{p}-{\Ql(\gamma)}^{p}\ =\ (1-{\Ql(\gamma)}^{p})^{+}\ \in [0,1]. $$
Assuming it be true for arbitrary $n$, we obtain
\begin{align*}
0\ &\le\ Y_{p,\sl_{n+1}(\gamma)}-X_{p,\sl_{n+1}(\gamma)}\\
&=\ \Mln(\gamma)\big(Y_{p,\sl_{n}(\gamma)}-X_{p,\sl_{n}(\gamma)}\big)\,+\,(1-{\Ql_{n+1}(\gamma)}^{p})^{+}\ \le\ \gamma\sum_{k=0}^{n-1}\gamma^{k}\,+\,1
\end{align*}
and thus the desired result.

It remains to prove the final equivalence statement. By \eqref{eq:inequality Y_p,sln}, the recurrence of the two level $\gamma$ ladder RDE-chain $(X_{p,\sln(\gamma)})_{n\ge 0}$ and $(Y_{p,\sln(\gamma)})_{n\ge 0}$ are obviously equivalent. Hence we arrive at the desired conclusion because, by Lemma \ref{lem:recurrence ladder chain}, the joint recurrence of $(X_{p,n})_{n\ge 0}$ and $(Y_{p,n})_{n\ge 0}$ is equivalent to the joint recurrence of their aforementioned respective ladder RDE-chains.
\end{proof}

\begin{proof}[Proof of Proposition \ref{prop:power threshold}]
For the $Y_{p,n}$, $(p,n)\in (0,\infty)\times\N_{0}$, considered in the previous lemma, we obviously have $Y_{p,n}\le Y_{q,n}$ whenever $p<q$. Consequently, if $(Y_{q,n})_{n\ge 0}$ is recurrent, then the same holds true for $(Y_{p,n})_{n\ge 0}$. The set
$$ \left\{p>0:(Y_{p,n})_{n\ge 0}\text{ recurrent}\right\} $$
must therefore be an interval which may be empty. But the previous lemma further ensures that this set remains the same when replacing $(Y_{p,n})_{n\ge 0}$ with $(X_{p,n})_{n\ge 0}$.
\end{proof}


As one can readily check, Proposition \ref{prop:power threshold} remains valid if the criticality condition \eqref{eq:basic assumption 2} is replaced with
$\lim_{n\to\infty}\Pi_{n}=0$ a.s.~and  \eqref{eq:basic assumption 3}. Then it covers also the positive recurrent case when
\begin{equation}\label{eq:cond pos rec}
\lim_{n\to\infty}\Pi_{n}\,=\,0\ \text{  a.s. and }\ I_{Q}\,<\,\infty
\end{equation}
hold true, see \cite[Thm.~2.1]{GolMal:00},
and the \emph{divergent contractive case}, thus called and studied in \cite{AlsBurIks:17}, when
\begin{equation}\label{eq:cond div contractive}
\lim_{n\to\infty}\Pi_{n}\,=\,0\ \text{  a.s. and }\ I_{Q}\,=\,\infty.
\end{equation}
Here $I_{Q}:=\Erw J_{-}(\log_{+}Q)$ with $J_{-}(x):=x/\Erw(x\wedge\log_{-}M)$ for $x>0$ and $J_{-}(0):=0$. Having stated this, the next two propositions are easily obtained by combining Proposition \ref{prop:power threshold} with \cite[Thm.~2.1]{GolMal:00} and the main results in \cite{AlsBurIks:17}, respectively. They should be viewed as the counterparts of Theorem \ref{thm:main 2}(c) for these cases.

\begin{Prop}
Given a sequence of i.i.d.~random vectors $(M_{n},Q_{n})_{n\ge 1}$ in $\R_{+}^{2}$ with generic copy $(M,Q)$ satisfying \eqref{eq:basic assumption 1}, \eqref{eq:basic assumption 3} and \eqref{eq:cond pos rec}, the sequence $(X_{p,n})_{n\ge 0}$ is positive recurrent for all $p\ge 0$, thus $p_{0}=\infty$.
\end{Prop}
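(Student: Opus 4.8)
The plan is to apply the Goldie--Maller dichotomy \cite[Thm.~2.1]{GolMal:00} separately to each chain $(X_{p,n})_{n\ge 0}$ and to verify its positive-recurrence criterion uniformly in $p\ge 0$. First I would observe that $(X_{p,n})_{n\ge 0}$ is itself an RDE-chain whose associated random vector is $(M,Q^{p})$: its multipliers are the same $M_{n}$ as before, and its additive terms $Q_{n}^{p}$ are i.i.d.~copies of $Q^{p}$. Since $\Prob(M=0)=0$ and $\Prob(Q^{p}=0)=\Prob(Q=0)<1$, assumption \eqref{eq:basic assumption 1} carries over, and the negative-divergence hypothesis $\Pi_{n}\to 0$ a.s.~from \eqref{eq:cond pos rec} is a condition on $M$ alone, hence unchanged. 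By \cite[Thm.~2.1]{GolMal:00}, positive recurrence of $(X_{p,n})_{n\ge 0}$ therefore holds as soon as $\Pi_{n}\to 0$ a.s.~and the integral condition $I_{Q^{p}}<\infty$ is met, where $I_{Q^{p}}=\Erw J_{-}(\log_{+}Q^{p})$.

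The crux is thus to deduce $I_{Q^{p}}<\infty$ for every $p\ge 0$ from the single assumption $I_{Q}<\infty$. Using $\log_{+}Q^{p}=p\,\log_{+}Q$, this reduces to a comparison of $J_{-}(px)$ with $J_{-}(x)$. I would establish two elementary monotonicity/scaling properties of $J_{-}$, both flowing from the fact that $g(x):=\Erw(x\wedge\log_{-}M)$ is nondecreasing and concave with $g(0)=0$ (being an average of the concave nondecreasing maps $x\mapsto x\wedge\log_{-}M$), and strictly positive for $x>0$ because $\Pi_{n}\to 0$ a.s.~forces $\Prob(M<1)>0$. Concavity through the origin makes $x\mapsto g(x)/x$ nonincreasing, so $J_{-}(x)=x/g(x)$ is nondecreasing; and monotonicity of $g$ gives $g(\lambda x)\ge g(x)$ for $\lambda\ge 1$, whence $J_{-}(\lambda x)\le\lambda J_{-}(x)$. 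Applying the first bound for $0\le p\le 1$ and the second for $p\ge 1$ yields the pointwise inequality $J_{-}(p\,\log_{+}Q)\le(1+p)\,J_{-}(\log_{+}Q)$, and taking expectations gives $I_{Q^{p}}\le(1+p)\,I_{Q}<\infty$.

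Combining the two steps shows that $(X_{p,n})_{n\ge 0}$ is positive recurrent, in particular recurrent, for every $p\ge 0$. Proposition \ref{prop:power threshold}, which remains valid here by the remark preceding the statement, then forces the recurrence set to be all of $[0,\infty)$, i.e.~$p_{0}=\infty$. I expect the only genuine work to lie in the second step, namely the two structural properties of $J_{-}$ above; everything else is a direct invocation of \cite[Thm.~2.1]{GolMal:00} and Proposition \ref{prop:power threshold}. A minor point to address in passing is the possibility that $(M,Q^{p})$ degenerates in the sense of \eqref{eq:basic assumption 3} even though $(M,Q)$ does not; in that event $X_{p,n}\to c$ a.s.~for some constant $c$, so positive recurrence (with invariant measure $\delta_{c}$) holds trivially and the conclusion is unaffected.
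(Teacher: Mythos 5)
Your proposal is correct and follows the same route as the paper: invoke \cite[Thm.~2.1]{GolMal:00} for each $p$ after checking that $I_{Q}<\infty$ forces $I_{Q^{p}}<\infty$. The paper treats that implication as an immediate observation, whereas you supply the (correct) justification via the monotonicity and subhomogeneity of $J_{-}$ coming from the concavity of $x\mapsto\Erw(x\wedge\log_{-}M)$; this is a welcome filling-in of detail, not a different argument.
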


\begin{proof}
The result is immediate by \cite[Thm.~2.1]{GolMal:00} when observing that $I_{Q}<\infty$ is  equivalent to $I_{Q^{p}}<\infty$ for all $p>0$.
\end{proof}

For the corresponding result in the divergent contractive case, we define
$$ r_{*}(\ovl{F})\ :=\ \liminf_{t\to\infty}t\,\ovl{F}(t)\quad\text{and}\quad r^{*}(\ovl{F})\ :=\ \limsup_{t\to\infty}t\,\ovl{F}(t) $$
for any survival function $\ovl{F}$.

\begin{Prop}
Given a sequence of i.i.d.~random vectors $(M_{n},Q_{n})_{n\ge 1}$ in $\R_{+}^{2}$ with generic copy $(M,Q)$ satisfying \eqref{eq:basic assumption 1}, \eqref{eq:basic assumption 3}, \eqref{eq:cond div contractive}, and
\begin{equation*}
r^{*}(\ovl{F})\,<\,\infty
\end{equation*}
for $\ovl{F}(t):=\Prob(\log Q>t)$, the following assertions hold true:
\begin{itemize}\itemsep2pt
\item[(a)] If $\fm:=\Erw\log M\in (-\infty,0)$, then there exists a critical exponent
$$ p_{0}\ \in\ \left[\frac{|\fm|}{r^{*}(\ovl{F})},\frac{|\fm|}{r_{*}(\ovl{F})}\right] $$
such that $(X_{p,n})_{n\ge 0}$ is null-recurrent for all $p<p_{0}$ and transient for $p>p_{0}$.
\item[(b)] If $\fm=-\infty$ or does not exist, then $(X_{p,n})_{n\ge 0}$ is null-recurrent for all $p\ge 0$.
\end{itemize}
\end{Prop}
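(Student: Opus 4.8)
The plan is to combine the threshold result of Proposition~\ref{prop:power threshold}, which (as noted in the preceding remark) remains valid in the divergent contractive case, with the recurrence/transience criteria for this case established in \cite[Thms.~3.1 and 3.2]{AlsBurIks:17}. Proposition~\ref{prop:power threshold} already supplies a critical exponent $p_{0}\in[0,\infty]$ such that $(X_{p,n})_{n\ge 0}$ is transient for $p>p_{0}$ and recurrent for $p<p_{0}$; what remains is to locate $p_{0}$ inside the asserted interval, to upgrade recurrence to null-recurrence, and to dispose of the case $\fm\notin(-\infty,0)$.

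First I would record how the relevant quantities transform under the passage from $(M,Q)$ to $(M,Q^{p})$. The drift $\fm=\Erw\log M$ does not involve $Q$ and is thus unchanged, while the survival function of $\log Q^{p}=p\log Q$ is $\ovl F_{p}(t):=\ovl F(t/p)$, so that a change of variable yields $r^{*}(\ovl F_{p})=p\,r^{*}(\ovl F)$ and $r_{*}(\ovl F_{p})=p\,r_{*}(\ovl F)$. I would also verify that the standing hypotheses are inherited by $(M,Q^{p})$: \eqref{eq:basic assumption 1} is immediate, $I_{Q^{p}}=\infty$ is equivalent to $I_{Q}=\infty$ (as already used above), so that \eqref{eq:cond div contractive} persists and $r^{*}(\ovl F_{p})=p\,r^{*}(\ovl F)<\infty$; finally \eqref{eq:basic assumption 3} for $(M,Q^{p})$ holds because $I_{Q}=\infty$ forces $Q$ to be essentially unbounded, which rules out the degenerate relation $Q^{p}=c(1-M)$.

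For part~(a), with $\fm\in(-\infty,0)$, I would apply the criteria of \cite{AlsBurIks:17} to the chain with associated vector $(M,Q^{p})$: it is recurrent whenever $r^{*}(\ovl F_{p})=p\,r^{*}(\ovl F)<|\fm|$, that is for $p<|\fm|/r^{*}(\ovl F)$, and transient whenever $r_{*}(\ovl F_{p})=p\,r_{*}(\ovl F)>|\fm|$, that is for $p>|\fm|/r_{*}(\ovl F)$. Comparing these two ranges with the monotone threshold furnished by Proposition~\ref{prop:power threshold} then confines $p_{0}$ to $[\,|\fm|/r^{*}(\ovl F),\,|\fm|/r_{*}(\ovl F)\,]$. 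Since $I_{Q^{p}}=\infty$ throughout, positive recurrence is excluded by the Goldie--Maller theorem \cite[Thm.~2.1]{GolMal:00}, so that every recurrent member of the family is automatically null-recurrent. For part~(b), the drift of $(M,Q^{p})$ stays $\fm=-\infty$ (or nonexistent) for every $p$ while $r^{*}(\ovl F_{p})<\infty$, so the corresponding recurrence criterion of \cite{AlsBurIks:17} applies for all $p$, yielding null-recurrence throughout and hence $p_{0}=\infty$.

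The only genuinely delicate point I foresee is the faithful transcription of the \cite{AlsBurIks:17} criteria into the scaled quantities $r^{*}(\ovl F_{p}),r_{*}(\ovl F_{p})$, together with the verification that the hypotheses of those theorems (beyond the divergent contractive condition) are met by $(M,Q^{p})$ for each $p$; once this is in place, the remaining steps are the elementary bookkeeping indicated above.
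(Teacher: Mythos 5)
Your proposal is correct and follows essentially the same route as the paper: rescale $r_{*},r^{*}$ via $\ovl F_{p}(t)=\ovl F(t/p)$, invoke \cite[Thms.~3.1 and 3.2]{AlsBurIks:17} for the null-recurrence/transience ranges, and use Proposition \ref{prop:power threshold} to pin down the critical exponent. The only difference is presentational: you derive null-recurrence by separately excluding positive recurrence via Goldie--Maller, whereas the cited theorems already assert null-recurrence directly; your extra checks that the hypotheses persist for $(M,Q^{p})$ are sound but left implicit in the paper.
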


\begin{proof}
Noting that $\ovl{F}_{p}(t):=\Prob(\log Q^{p}>t)=\ovl{F}(t/p)$ for all $t\in\R$, we see that $r_{*}(\ovl{F}_{p})=pr_{*}(\ovl{F})$ and $r^{*}(\ovl{F}_{p})=pr^{*}(\ovl{F})$.

\vspace{.1cm}
(a) Suppose that $\fm\in (-\infty,0)$. By \cite[Thm.~3.1]{AlsBurIks:17}, we then infer the null-recurrence of $(X_{p,n})_{n\ge 0}$ if $pr^{*}(\ovl{F})<|\fm|$, and the transience if $pr_{*}(\ovl{F})>|\fm|$. The assertion about $p_{0}$ follows.

\vspace{.1cm}
(b) If $\fm=-\infty$ or does not exist, then we obtain the null-recurrence for all $p$, in the first case by another appeal to \cite[Thm.~3.1]{AlsBurIks:17} and in the second case by \cite[Thm.~3.2]{AlsBurIks:17}.
\end{proof}

\section{A tail lemma}\label{sec:tail lemma}

In order to prove our results by a look at the embedded ladder RDE-chain, we need information on the tail behavior of
$$ \log\Ql\ =\ \log\left(\sum_{k=1}^{\sl}e^{S_{\sl}-S_{k}}Q_{k}\right) $$
which satisfies the two inequalities
\begin{gather}
\log\Ql\ \le\ \log\sl\,+\,\max_{1\le k\le \sl}\,\log Q_{k},\label{eq:Qsl lower/upper bound}
\shortintertext{and}
\log\Ql\ \ge\ \max_{1\le k\le \sl}\big((\log Q_{k})+(S_{\sl}-S_{k})\big),\label{eq:Qsl lower bound}
\end{gather}
as one can readily see.

\begin{Lemma}\label{lem:tail lemma 1}
Given an RDE-chain with associated random vector $(M,Q)$ in $\R_{+}^{2}$ satisfying  \eqref{eq:basic assumption 1}, \eqref{eq:basic assumption 2}, and \eqref{eq:Spitzer condition} for some $\rho\in (0,1)$, the following assertions hold:
\begin{itemize}\itemsep2pt
\item[(a)] Condition \eqref{eq:upper bound tail logQ} for all sufficiently large $t$ and a survival function $\ovl{F}$ entails
\begin{equation*}
\limsup_{t\to\infty}t\,\Prob\left(\Ql>t\right)\ \le\ s^{*}(\ovl{F})\ \in\ [0,\infty].
\end{equation*}
\item[(b)]
If \eqref{eq:ES_sl finite} is additionally assumed, then Condition \eqref{eq:lower bound tail logQ} for all sufficiently large $t$ and a survival function $\ovl{G}$ entails
\begin{equation*}
\limsup_{t\to\infty}t\,\Prob\left(\Ql>t\right)\ \ge\ s_{*}(\ovl{G})\ \in\ [0,\infty].
\end{equation*}
\end{itemize}
Here $s^{*}(\ovl{F})$ and $s_{*}(\ovl{G})$ are as in \eqref{eq2:limsup Fbar} and \eqref{eq:liminf Gbar}, respectively.
\end{Lemma}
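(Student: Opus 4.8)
The plan is to analyze $\Prob(\log\Ql>t)$ through the two deterministic bounds \eqref{eq:Qsl lower/upper bound} and \eqref{eq:Qsl lower bound}, working throughout conditionally on $\cF:=\sigma(M_{k}:k\ge1)$. This $\sigma$-field renders $\sl$ and all the $S_{k}$ measurable while keeping the $Q_{k}$ conditionally independent with $\Prob(\log Q_{k}>s\,|\,\cF)=\Prob(\log Q_{k}>s\,|\,M_{k})$, so that the one-sided hypotheses \eqref{eq:upper bound tail logQ} and \eqref{eq:lower bound tail logQ} are available term by term even when $M$ and $Q$ are dependent.

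For part (a) I would start from $\log\Ql\le\log\sl+\max_{1\le k\le\sl}\log Q_{k}$. The contribution of $\log\sl$ is negligible: since $\Prob(\sl>n)$ is regularly varying of index $-\rho$ by \eqref{eq:tail sl}, $\Prob(\log\sl>\delta t)$ decays geometrically in $t$, whence $t\,\Prob(\log\sl>\delta t)\to0$ for every $\delta>0$. It then suffices to estimate $\Prob(\max_{k\le\sl}\log Q_{k}>v)$. Conditionally on $\cF$ this is a maximum of independent events, so \eqref{eq:upper bound tail logQ} gives $\Prob(\max_{k\le\sl}\log Q_{k}>v\,|\,\cF)\le1-(1-\ovl F(v))^{\sl}$, and the heart of the matter is the Abelian estimate $\Erw[1-(1-a)^{\sl}]\sim a^{\rho}\,\elll_{\rho}(1/a)$ as $a\downarrow0$, which is exactly the Karamata-type reformulation of \eqref{eq:tail sl} (here the factor $\Gamma(1-\rho)$ cancels). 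Taking $a=\ovl F(v)$, then $v=(1-\delta)t$, and letting $\delta\downarrow0$ yields $\limsup_{t}t\,\Prob(\log\Ql>t)\le s^{*}(\ovl F)$; this uses only Spitzer's condition, not \eqref{eq:ES_sl finite}.

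For part (b) I would use $\log\Ql\ge\max_{1\le k\le\sl}\big(\log Q_{k}+S_{\sl}-S_{k}\big)$, so that conditionally on $\cF$ the tail of $\log\Ql$ dominates $1-\prod_{k=1}^{\sl}\big(1-\ovl G(t+S_{k}-S_{\sl})\big)\ge1-\exp\big(-\Sigma_{\cF}\big)$, where $\Sigma_{\cF}:=\sum_{k=1}^{\sl}\ovl G(t+S_{k}-S_{\sl})$ and all shifts $S_{k}-S_{\sl}\ge0$. Hence $\Prob(\log\Ql>t)\ge(1-e^{-1})\,\Prob(\Sigma_{\cF}\ge1)$, and since $\Sigma_{\cF}\ge\sl\,\ovl G(t+\Delta_{\sl})$ for the excursion range $\Delta_{\sl}:=\max_{k\le\sl}S_{k}-S_{\sl}$, it is enough to bound $\Prob\big(\sl\ge1/\ovl G(t+\Delta_{\sl}),\,\Delta_{\sl}\le t\big)$ from below. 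On $\{\Delta_{\sl}\le t\}$ regular variation of $\ovl G$ gives $\ovl G(t+\Delta_{\sl})\asymp\ovl G(t)$, so the target reduces to $\Prob\big(\sl\gtrsim1/\ovl G(t),\,\Delta_{\sl}\le t\big)\gtrsim\Prob(\sl\gtrsim1/\ovl G(t))\sim\ovl G(t)^{\rho}\elll_{\rho}(1/\ovl G(t))$, which then delivers $\limsup_{t}t\,\Prob(\log\Ql>t)\ge s_{*}(\ovl G)$. This is exactly where \eqref{eq:ES_sl finite} enters: finiteness of $\kappa=\Erw|S_{\sl}|$ pins down the pre-minimum occupation structure and forces the range $\Delta_{\sl}$, at the critical length $\sl\asymp1/\ovl G(t)\asymp t^{1/\rho}$, to stay of order $t$ with probability bounded away from $0$.

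The main obstacle is this scale-matching in part (b): the heavy tail of $\sl$ that produces the $\ovl G(t)^{\rho}$ order demands long excursions, yet long excursions have large range $\Delta_{\sl}$, which erodes the per-term success probability $\ovl G(t+\Delta_{\sl})$. One must balance the two effects at length $\sl\asymp1/\ovl G(t)$ and show, via the finite mean $\kappa$ of the descending ladder height and the induced renewal regularity of the occupation of $(S_{k})_{k\le\sl}$ near its minimum, that $\Delta_{\sl}$ remains $O(t)$ on a set carrying the full $\Prob(\sl\gtrsim1/\ovl G(t))$ order. A secondary point requiring care is that the exceedance events along one excursion are positively correlated, so it is the saturation bound $1-e^{-\Sigma_{\cF}}$, rather than a first-moment estimate (which overshoots once $\rho>\tfrac12$), that keeps the lower bound honest.
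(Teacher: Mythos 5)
Part (a) of your proposal is correct and is essentially the paper's own argument: the split via \eqref{eq:Qsl lower/upper bound}, the conditioning on the $M$'s to reduce the maximum to $1-(1-\ovl{F}(v))^{\sl}$, and the Abelian estimate $1-\Erw(1-a)^{\sl}\sim a^{\rho}\elll_{\rho}(1/a)$ from \cite[Cor.~8.1.7]{BingGolTeug:89}. Nothing further is needed there.

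Part (b) is where your argument has genuine gaps, two of them. First, your chain of lower bounds cannot deliver the constant $s_{*}(\ovl{G})$, only a strictly smaller multiple of it, and the constant is the whole point: the lemma feeds into Theorem \ref{thm:main 2}(b) through the exact comparison with $\kappa$. Passing from $\Erw[1-e^{-\Sigma_{\cF}}]$ to $(1-e^{-1})\,\Prob(\Sigma_{\cF}\ge 1)$ costs the factor $1-e^{-1}$; replacing the generating-function asymptotics $1-\Erw r^{\sl}\sim(1-r)^{\rho}\elll_{\rho}(1/(1-r))$ by the tail $\Prob(\sl>n)\sim\elll_{\rho}(n)/(\Gamma(1-\rho)n^{\rho})$ costs a further $\Gamma(1-\rho)>1$; and $\ovl{G}(t+\Delta_{\sl})\asymp\ovl{G}(t)$ on $\{\Delta_{\sl}\le t\}$ costs yet another constant. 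The net result is $\liminf_{t}t\,\Prob(\log\Ql>t)\ge c\,s_{*}(\ovl{G})$ with $c<1$ depending on $\rho$, which is strictly weaker than the assertion. Second, the step $\Prob\bigl(\sl\gtrsim 1/\ovl{G}(t),\,\Delta_{\sl}\le t\bigr)\gtrsim\Prob\bigl(\sl\gtrsim 1/\ovl{G}(t)\bigr)$ is precisely the hard point, and you assert rather than prove it. It is not a soft consequence of $\kappa<\infty$: Doney's result gives $t\,\Prob(\max_{k<\sl}S_{k}>t)\to\kappa$, so $\Prob(\Delta_{\sl}>t)$ and $\Prob(\sl\gtrsim 1/\ovl{G}(t))$ are of the \emph{same} order $1/t$, and one has to exclude that the long excursions are essentially the high ones; that would require a conditional (meander-type) limit theorem which you do not supply and which does not follow from $\kappa<\infty$ and Spitzer's condition alone.

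The paper sidesteps both issues simultaneously. It first normalizes $\ovl{G}$ to be exactly regularly varying of index $-1/\rho$ (replacing it by $h_{\rho}^{-1}(\kappa'/t)$), then keeps the \emph{full} expectation $1-\Erw\exp\bigl(-\sum_{k\le\sl}\ovl{G}(\zeta_{k}(t))\bigr)$ and compares $\sum_{k}\ovl{G}(\zeta_{k}(t))$ with $\sl\,\ovl{G}(t)$ multiplicatively, bounding the relative erosion $\sl^{-1}\sum_{k}\bigl(1-\ovl{G}(\zeta_{k}(t))/\ovl{G}(t)\bigr)$ by a deterministic $\eps(t)\to 0$ using only $\zeta_{k}(t)\ge t$ and the convexity of $x\mapsto x^{1/\rho}$ — no event restriction, no control of $\Delta_{\sl}$, no conditional limit theorem — after which the sharp constant is read off from the Tauberian asymptotics of $\Erw r^{\sl}$. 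If you want to rescue your route, you would have to let the truncation level of $\Sigma_{\cF}$ vary and re-sum over the whole tail of $\sl$, which amounts to reconstructing that full expectation.
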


\begin{proof}
(a) By \eqref{eq:Qsl lower/upper bound}, we have for any $\eps\in (0,1)$ and $t>0$,
\begin{align*}
\Prob(\log\Ql>t)\ &\le\ \Prob(\log\sl>\eps t)\,+\,\Prob\left(\max_{1\le k\le \sl}\,\log Q_{k}>(1-\eps)t\right).
\end{align*}
Since $\Erw\log\sl<\infty$ (by \eqref{eq:Elog sl finite}) entails $\Prob(\log\sl>\eps t)=o(t)$ as $t\to\infty$, it suffices to show that
\begin{equation*}
\limsup_{t\to\infty}t\,\Prob\left(\max_{1\le k\le \sl}\,\log Q_{k}>t\right)\ \le\ s^{*}(\ovl{F}),
\end{equation*}
which in turn follows from
\begin{align*}
\Prob&\left(\max_{1\le k\le \sl}\,\log Q_{k}>t\right)\\
&=\ \sum_{n\ge 1}\int_{\{\sl=n\}}\Prob\left(\max_{1\le k\le n}\,\log Q_{k}>t\bigg|M_{1},\ldots,M_{n}\right)\ d\Prob\\
&=\ \sum_{n\ge 1}\int_{\{\sl=n\}}1-\prod_{k=1}^{n}\Prob(\log Q_{k}\le t|M_{k})\ d\Prob\\
&\le\ \sum_{n\ge 1}\int_{\{\sl=n\}}1-(1-\ovl{F}(t))^{n}\ d\Prob\\
&=\ 1\,-\,\Erw(1-\ovl{F}(t))^{\sl}\\
&\sim \ \ovl{F}(t)^{\rho}\,\elll_{\rho}(1/\ovl{F}(t))\quad\text{as }t\to\infty,
\end{align*}
where we have used \eqref{eq:upper bound tail logQ} for the fourth line and \cite[Cor.~8.1.7 on p.~334]{BingGolTeug:89} for the last one.

\vspace{.1cm}
(b) Without loss of generality $\ovl{G}(t)$ can be assumed to be regularly varying of index $-1/\rho$ at infinity and, for sufficiently large $t$, also smooth and convex with negative and concave derivative $\ovl{G}'(t)$ which is regularly varying of index $-(1+\rho)/\rho$. To see this, observe that \eqref{eq:liminf Gbar} provides $\liminf_{t\to\infty}th_{\rho}(\ovl{G}(t))>\kappa'$ for some $\kappa'>\kappa$, where $h_{\rho}(t)=t^{\rho}\ell_{\rho}(1/t)$ is regularly varying of index $\rho$ and thus ultimately increasing with regularly varying inverse $h_{\rho}^{-1}$ of index $1/\rho$. Hence $\ovl{G}(t)>h_{\rho}^{-1}(\kappa'/t)$ for all sufficiently large $t$ which in turn ensures that \eqref{eq:lower bound tail logQ} and \eqref{eq:liminf Gbar} remain valid for $\ovl{G}(t):=h_{\rho}^{-1}(\kappa'/t)$. But this function has the asserted form, including the additional smoothness properties when referring to \cite[Prop.~1.8.1]{BingGolTeug:89}. With $\ovl{G}$ thus chosen, we infer
\begin{align*}
\Prob&\left(\max_{1\le k\le \sl}\big((\log Q_{k})+(S_{\sl}-S_{k})\big)>t\right)\\
&=\ \sum_{n\ge 1}\int_{\{\sl=n\}}\Prob\left(\max_{1\le k\le n}\big((\log Q_{k})+(S_{n}-S_{k})\big)>t\bigg|M_{1},\ldots,M_{n}\right)\ d\Prob\\
&\ge\ \sum_{n\ge 1}\int_{\{\sl=n\}}1-\prod_{k=1}^{n}(1-\ovl{G}(t-S_{n}+S_{k}))\ d\Prob\\
&=\ 1\,-\,\Erw\exp\left(\sum_{k=1}^{\sl}\log(1-\ovl{G}(\zeta_{k}(t))\right),\quad\zeta_{k}(t)\,:=\,t-S_{\sl}+S_{k}\\
&\ge\ 1\,-\,\Erw\exp\left(-\sum_{k=1}^{\sl}\ovl{G}(\zeta_{k}(t))\right)\\
&=\ 1\,-\,\Erw\exp\left(-\ovl{G}(t)\,\sl\left[1-\frac{1}{\sl}\sum_{k=1}^{\sl}\left(1-\frac{\ovl{G}(\zeta_{k}(t))}{\ovl{G}(t)}\right)\right]\right)
\end{align*}
With the additional properties of $\ovl{G}$ and $\ovl{G}'(t)$, we further obtain for all sufficiently large $t$ that
\begin{align*}
&\frac{1}{\sl}\sum_{k=1}^{\sl}\left(1-\frac{\ovl{G}(\zeta_{k}(t))}{\ovl{G}(t)}\right)\ \le\ \frac{2}{\sl}\sum_{k=1}^{\sl}\left(1-\left(\frac{t}{\zeta_{k}(t)}\right)^{1/\rho}\right)\\
&\hspace{1.5cm}=\ \frac{2}{\sl}\sum_{k=1}^{\sl}\left(\frac{\zeta_{k}(t)^{1/\rho}-t^{1/\rho}}{t^{1/\rho}\,\zeta_{k}(t)^{1/\rho}}\right)\ \le\ \frac{2}{\sl}\sum_{k=1}^{\sl}\left(\frac{\zeta_{k}(t)-t}{\rho\,t^{1/\rho}\,\zeta_{k}(t)}\right)\\
&\hspace{1.5cm}\le\ \frac{2}{\rho\,t^{1/\rho}}\ =:\ \eps(t)
\end{align*}
and then, by another appeal to  \cite[Cor.~8.1.7 on p.~334]{BingGolTeug:89},
\begin{align*}
&\Prob\left(\max_{1\le k\le \sl}\big((\log Q_{k})+(S_{\sl}-S_{k})\big)>t\right)\\
&\hspace{1.5cm}\ge\ 1\,-\,\Erw\exp\left(-\ovl{G}(t)\,\big(1-\eps(t)\big)\sl\right)\\
&\hspace{1.5cm}\sim\ \ovl{G}(t)^{\rho}(1-\eps(t))^{\rho}\,\ell\left(\frac{1}{\ovl{G}(t)\,(1-\eps(t))}\right)\\
&\hspace{1.5cm}\sim\ \ovl{G}(t)^{\rho}\,\ell(1/\ovl{G}(t))
\end{align*}
as $t\to\infty$. This completes the proof of the lemma.
\end{proof}

\section{Proofs of main results}\label{sec:proofs main}

\subsection{Proof of Theorem \ref{thm:main 1}}
By Lemma \ref{lem:recurrence ladder chain}, it is enough to show recurrence of the ladder RDE-chain $(\Xln)_{n\ge 0}$ with associated random vector $(\Ml,\Ql)$. But since the latter chain is contractive $(\Pi_{\sln}\to 0\text{ a.s.})$, this follows directly from either \cite[Thm.~2.1]{GolMal:00} (positive recurrence) or \cite[Theorem 3.1(i)]{AlsBurIks:17} in combination with Lemma \ref{lem:tail lemma 1} (null-recurrence).\qed

\subsection{Proof of Theorem \ref{thm:main 2}} (a) Again, the result follows from Theorem 3.1(i) in \cite{AlsBurIks:17} applied to the ladder RDE-chain $(\Xln)_{n\ge 0}$ after noting that the latter is mean contractive, i.e.~$\Erw\log\Ml=\Erw S_{\sl}\in (-\infty,0)$, and $\Ql$ satisfies
$$ \limsup_{t\to\infty}t\,\Prob(\log\Ql>t)\ \le\ s^{*}(\ovl{F})\ <\ \Erw S_{\sl} $$
by Lemma \ref{lem:tail lemma 1}(a) and \eqref{eq2:limsup Fbar}.

\vspace{.1cm}
(b) Here the mean contractivity combines with
$$ \liminf_{t\to\infty}t\,\Prob(\log\Ql>t)\ \ge\ s_{*}(\ovl{G})\ >\ \Erw S_{\sl} $$
by Lemma \ref{lem:tail lemma 1}(b) and \eqref{eq:liminf Gbar}. Hence, the ladder RDE-chain is transient by Theorem 3.1(ii) in \cite{AlsBurIks:17}.

\vspace{.1cm}
(c) With $(M_{1},Q_{1}),(M_{2},Q_{2}),\ldots$ denoting i.i.d. copies of $(M,Q)$, let the RDE-chain $(X_{p,n})_{n\ge 0}$ be as defined in Proposition \ref{prop:power threshold} for $p>0$. By another use of Lemma \ref{lem:tail lemma 1}, here applied to $(X_{p,n})_{n\ge 0}$ with associated random vector $(M,Q^{p})$, we obtain
\begin{equation*}
ps_{*}(\ovl{G})\,\le\,\liminf_{t\to\infty}t\,\Prob(\log Q_{p,\sl}>t)\,\le\,\limsup_{t\to\infty}t\,\Prob(\log Q_{p,\sl}>t)\,\le\,ps^{*}(\ovl{F})
\end{equation*}
where $Q_{p,\sl}$ takes the role of $\Ql$ for the ladder RDE-chain $(X_{p,\sln})_{n\ge 0}$. To see this, note that $Q^{p}$ for any $p$ still satisfies \eqref{eq:upper bound tail logQ} and \eqref{eq:lower bound tail logQ}, but with $\ovl{F}(\cdot/p)$ and $\ovl{G}(\cdot/p)$ instead of $\ovl{F}$ and $\ovl{G}$, respectively. From the already shown parts (a) and (b), we finally infer the recurrence of $(X_{p,\sln})_{n\ge 0}$ and thus $(X_{p,n})_{n\ge 0}$ whenever $ps^{*}(\ovl{F})<\kappa$, and the transience whenever $ps_{*}(\ovl{G})>\kappa$. And so the critical exponent $p_{0}$ must lie between the asserted bounds $\kappa/s^{*}(\ovl{F})$ and $\kappa/s_{*}(\ovl{G})$.\qed

\subsection{Proof of Theorem \ref{thm:main 3}}

In view of Lemma \ref{lem:recurrence ladder chain}, it suffices to argue that the embedded ladder RDE-chain $(\Xl_{n})_{n\ge 0}$ is positive recurrent. By \eqref{eq:def Mless and Qless}, its associated random vector has here the form
\begin{equation*}
(\Ml,\Ql)\ :=\ \Pi_{\sl}\cdot\left(1,\sum_{k=1}^{\sl}\Pi_{k}^{-1}(aM_{k}+b)\right),
\end{equation*}
and since
\begin{align*}
\Ql\ =\ \Pi_{\sl}\left(a\sum_{k=0}^{\sl-1}\Pi_{k}^{-1}+b\sum_{k=1}^{\sl}\Pi_{k}^{-1}\right)\ \le\ (a+b)\sl
\end{align*}
we infer $\Erw\log_{+}\Ql<\infty$ with the help of \eqref{eq:Elog sl finite}. Since $(\Xln)_{n\ge 0}$ is clearly contractive, positive recurrence follows from \cite[Thm.~2.1]{GolMal:00} (or more general results like  \cite[Thm.~3]{Elton:90} or \cite[Thm.~1.1]{DiaconisFr:99}).\qed

\subsection{Proof of Theorem \ref{thm:main 4}}
In view of Theorem \ref{thm:main 3}, we must only prove the transience of the chain when $M$ and $Q$ are independent. As mentioned earlier, $\Erw\log_{-}^{2} M<\infty$ in combination with \eqref{eq:ElogM=0} ensures $\kappa=\Erw|S_{\sl}|<\infty$. Furthermore, the slow variation of $L(t)=t^{1/\rho}\,\ovl{F}(t)$ for $\ovl{F}(t):=\Prob(\log M>t)=\Prob(\log Q>t)$ and $\rho\in (\frac{1}{2},1)$ is then equivalent to the validity of \eqref{eq:Spitzer condition}, by \cite[Thm.~1]{Doney:77}, which in turn entails \eqref{eq:Elog sl finite}.
Finally, we arrive at the desired conclusion by invoking Theorem \ref{thm:main 2}(b) if we still show that $L(t)\to\infty$ implies
$$ s_{*}(\ovl{F})\ =\ \liminf_{t\to\infty}t\ovl{F}(t)^{\rho}\elll_{\rho}(1/\ovl{F}(t))\ =\ \infty. $$
To this end, we will actually prove that $\elll_{\rho}(s)\to\infty$ as $s\to\infty$ and point out first that, similar to \eqref{eq:tail sl}, we have
\begin{equation}\label{eq:tail sg}
\Prob(\sg>n)\ \sim\ \frac{\ellg_{1-\rho}(n)}{\Gamma(\rho)\,n^{1-\rho}}\quad\text{as }n\to\infty
\end{equation}
for the first strictly ascending ladder epoch $\sg:=\inf\{n\ge 1:S_{n}>0\}$, where
\begin{align*}
\ellg_{1-\rho}(s)\ :=\ \exp\left(\sum_{n\ge 1}\frac{(1-s^{-1})^{n}}{n}\big(1-\rho-\Prob(S_{n}>0)\big)\right),\quad s\in (1,\infty),
\end{align*}
is slowly varying and obviously related to $\elll_{\rho}$ by the identity
\begin{gather*}
\ellg_{1-\rho}(s)\ =\ \exp\left(\sum_{n\ge 1}\frac{(1-s^{-1})^{n}}{n}\,\Prob(S_{n}=0)\right)\frac{1}{\elll_{\rho}(s)},
\shortintertext{and thus}
\ellg_{1-\rho}(s)\ \sim\ \frac{\theta}{\elll_{\rho}(s)}\quad\text{as }s\to\infty.
\shortintertext{Here}
\theta\ :=\ \exp\left(\sum_{n\ge 1}\frac{1}{n}\,\Prob(S_{n}=0)\right)
\end{gather*}
is well-known to be always finite, see e.g. \cite[Cor.\ 3.3]{Spitzer:60b}. So it remains to verify that $\ellg_{1-\rho}(s)\to 0$ as $s\to\infty$. Now use another result by Doney \cite[Thm.~2]{Doney:82b} to infer that, under the assumptions of the theorem, the relation $\ovl{F}(t)\sim
t^{-1/\rho}L(t)$ is actually equivalent to the relation
\begin{equation}\label{eq2:tail sg}
\Prob(\sg>n)\ \sim\ \frac{c}{L_{1/\rho}^{*}(n)\,n^{1-\rho}}\quad\text{as }n\to\infty,
\end{equation}
for some $c>0$ and a slowly varying function $L_{1/\rho}^{*}$ which is related to $L$ by
$$L(s)^{-\rho}L_{1/\rho}^{*}(s^{1/\rho}/L(s))\ \to\ 1\quad\text{as }s\to\infty $$
and unique up to asymptotic equivalence. Since $L(s)\to\infty$, also $L_{1/\rho}^{*}(s)\to\infty$ holds, and we finally infer $\ellg_{1-\rho}(s)\to 0$ as $s\to\infty$ when combining \eqref{eq:tail sg} with \eqref{eq2:tail sg}.\qed

\bibliographystyle{amsplain}
\bibliography{StoPro}

\end{document}